\definecolor{darkspringgreen}{rgb}{0.09, 0.45, 0.27}
\newtheorem{thm}{Theorem}[section]
\newtheorem{cor}[thm]{Corollary}
\newtheorem{lem}[thm]{Lemma}
\newtheorem{prop}[thm]{Proposition}
\newtheorem{ex}[thm]{Example}
\newtheorem{prob}[thm]{Problem}
\theoremstyle{definition}
\theoremstyle{remark}
\newtheorem{rem}[thm]{Remark}
\newcommand{\thmref}[1]{Theorem~\ref{#1}}
\newcommand{\lemref}[1]{Lemma~\ref{#1}}
\numberwithin{equation}{section}
\newcommand{\nc}{\newcommand}
\nc{\on}{\operatorname} \nc{\wh}{\widehat}
\nc\ol{\overline} \nc\ul{\underline} \nc\wt{\widetilde}
\nc{\Hom}{\operatorname{Hom}} \nc{\reg}{\operatorname{reg}} \nc{\diag}{\operatorname{diag}}
\nc{\red}[1]{{\color{red}#1}}
\newcommand{\CC}{\mathbb{C}}
\newcommand{\tensor}{\otimes}
\newcommand{\injto}{\hookrightarrow}
\DeclareMathOperator{\End}{End}
\DeclareMathOperator{\Rep}{\underline{\smash{{\rm Rep}}}}
\tikzset{middlearrow/.style={
        decoration={markings,
            mark= at position 0.55 with {\arrow{#1}},
        },
        postaction={decorate}
    }
}
\newcommand{\bbone}{\text{\usefont{U}{bbold}{m}{n}1}}
\newcommand{\F}{{}_2F_1}
\begin{document}
\author{ P. E\MakeLowercase{TINGOF} \textsuperscript{$\star$}, I. M\MakeLowercase{OTORIN}\textsuperscript{$*$}, A. V\MakeLowercase{ARCHENKO}\textsuperscript{$\diamondsuit$}, \MakeLowercase{and} 
I. Z\MakeLowercase{HU}\textsuperscript{$\dagger$}}
\address{\parbox{\linewidth}{\textsuperscript{$\diamondsuit$} Department of Mathematics, University of North Carolina, Chapel Hill, NC 27599 – 3250, USA\\
\textsuperscript{$\star, *, \dagger$} Department of Mathematics, MIT, Cambridge, MA 02139, USA}}
\email{\parbox{\linewidth}{\textsuperscript{$\star$} etingof@math.mit.edu,\\
\textsuperscript{$*$} ivanm597@mit.edu,\\
\textsuperscript{$\diamondsuit$} anv@email.unc.edu,\\
\textsuperscript{$\dagger$} isaaczhu@mit.edu.}}
\title[Knizhnik-Zamolodchikov equations in Deligne categories]{Knizhnik-Zamolodchikov equations in Deligne categories}
      
\begin{abstract}
  We consider the Knizhnik-Zamolodchikov equations in Deligne Categories in the context of $(\mathfrak{gl}_m,\mathfrak{gl}_{n})$ and $(\mathfrak{so}_m,\mathfrak{so}_{2n})$ dualities. We derive integral formulas for the solutions in the first case
  and compute monodromy   
  in both cases.
\end{abstract}

\maketitle
\tableofcontents

\section{Introduction}
The Knizhnik-Zamolodchikov (KZ) connection is an important object in representation theory of affine Lie algebras and quantum groups. Namely, for an arbitrary simple Lie algebra $\mathfrak{g}$ we may consider a connection $\nabla_{KZ}$ on a base space
\begin{equation}\label{base}
    \CC^r \setminus \bigcup_{1\le i<j\le r} \{z \in \CC^r | z_i-z_j=0\}
\end{equation}
given by
\begin{equation}
    \nabla_{KZ}=d-\hbar \sum_{1\le i<j \le r} \frac{d(z_i-z_j)}{z_i-z_j}\Omega_{ij},
\end{equation}
where $\hbar\in \CC$ and $\Omega_{ij}\in U(\mathfrak{g})^{\otimes r}$ is equal to
\begin{equation}
    \Omega_{ij}=\sum_{1\le a\le \dim(\mathfrak{g})}1^{(1)}\otimes \dots \otimes 1^{(i-1)}\otimes e_a^{(i)} \otimes 1^{(i+1)} \otimes \dots \otimes 1^{(j-1)}\otimes e^{a(j)} \otimes 1^{(j+1)} \otimes \dots \otimes 1^{(r)}
\end{equation}
and $e_a,e^a$ are dual bases in $\mathfrak{g}$. The Knizhnik-Zamolodchikov connection admits an obvious generalization to the case of general linear Lie algebras $\mathfrak{gl}_n$.\\

Instead of working with $U(\mathfrak{g})^{\otimes r}$ as a fiber of the trivial vector bundle with the base \eqref{base}, we choose to work with $\mathfrak{g}$-modules $V_1,\dots, V_r$. We may choose a Cartan subalgebra $\mathfrak{h}$ for $\mathfrak{g}$. Then, since the action of $\mathfrak{g}$ on $V_1\otimes \dots \otimes V_r$ commutes with $\Omega_{ij}$ and since the operators $\Omega_{ij}$ have weight $0$ with respect to $\mathfrak{g}$, it makes sense to restrict the connection to a weight space
\begin{equation}\label{KZ-weight-space}
    (V_1\otimes \dots \otimes V_r) [\mu] \subset V_1\otimes \dots \otimes V_r, \quad \mu \in \mathfrak{h}^*.
\end{equation}
If we slightly deform the KZ connection and look for the flat sections of $\nabla_{KZ}$ on \eqref{KZ-weight-space}, one can write a system of compatible dynamical equations and integral solutions which satisfy both the KZ and the dynamical equations (see \cite{FMTV}[Theorems 3.1, 3.2]).\\

The Deligne categories $\Rep(GL_t),\Rep(O_t)$ for a parameter $t\in \mathbb{C}$ are certain interpolations of the representation categories of the classical algebraic groups $GL_n$ and $O_n$ respectively (\cite{CW,Etc}). It is possible to produce a pencil of KZ connections in the setting of Deligne categories depending on the parameter $t$ (\cite{EV}[Section 5.1.4]). Therefore, we may look for the flat sections of the KZ equations in this case as well. A direct application of the approach in \cite{FMTV} fails since there are no weight spaces \eqref{KZ-weight-space} in Deligne categories. Nevertheless, it is possible to find a certain $(\mathfrak{gl}_m,\mathfrak{gl}_n)$-duality which allows us to write integral formulas for the solutions to the KZ equations for all noninteger and large enough integer $t$.\\

In the case of $\Rep(O_t)$, we can produce a similar $(\mathfrak{so}_m,\mathfrak{so}_{2n})$-duality. It gives us a new flat connection \eqref{new_connection} which can be obtained by a suitable reduction of the boundary Casimir connection in \cite{BK}[Formula (10.9)]. However, finding flat sections of the orthogonal KZ for Deligne categories or the new connection is an open problem.\\

Conjecturally (see \cite{EV}), all periodic pencils should admit integral formulas for solutions. Indeed, since they are quasi-geometric for rational values of parameters, Simpson conjectures (\cite{Sim92}) imply that they are actually geometric. Also, they have nilpotent $p$-curvature at rational parameters, so Andr\'e-Bombieri-Dwork conjectures (\cite{And89,Bom81,Dw94}) would imply that the periodic pencils are geometric. Although, there are examples where such formulas aren't known (e.g. Dunkl connections for exceptional groups). In particular, we check that for KZ equations in Deligne categories the Simpson and Andr\'e-Bombieri-Dwork conjectures indeed hold.\\

The Drinfeld-Kohno theorem (e.g. see \cite{Dr1,Dr2,EK}) states that one can compute the monodromy of the KZ equations in terms of quantum $R$-matrices. We show that this result can be generalized in the context of Deligne categories and their quantizations known as skein categories. \\

A starting point of this project was an observation in \cite{TVqKZ} that the asymptotics of hypergeometric integrals, as their dimension tends to infinity, can be analyzed in some examples using the steepest descent method applied to other hypergeometric integrals of fixed dimension.\\

The paper is structured as follows. Section 2 contains preliminaries. In Section 3 we produce the $GL_t$ duality under which the KZ connection maps to the dynamical connection on a certain simple module for the dual general linear Lie algebra. The integral formulas for solutions to the dynamical equations from Section 3 are presented in Section 4. Section 5 extends the results of Section 3 regarding duality for the orthogonal case $O_t$. Finally, Section 6 generalizes the Drinfeld-Kohno theorem (\cite{Dr1,Dr2,EK}) in the context of Deligne categories.
\subsection{Acknowledgements.} Pavel Etingof's work was partially supported by the NSF grant DMS-2001318. Alexander Varchenko's work was partially supported by the Simons Foundation grant TSM-00012774. We thank UROP and UROP$+$ organizers for providing framework for this research. 
\section{Preliminaries}
\subsection{Kac-Moody Lie algebras.} Suppose we are given an integer square matrix $A$ of size $n$ and rank $l$, such that
\begin{equation}
    a_{ii}=2, \quad a_{ij}\le 0 \text{ if } i\neq j, \quad a_{ij}=0 \Rightarrow a_{ji}=0.
\end{equation}
It is called a generalized Cartan matrix. Let $\mathfrak{h}$ be a vector space of dimension $2n-l$ with independent simple co-roots $\Pi^{\vee}=\{h_1,\dots, h_n\}$ in $\mathfrak{h}$ and let $\Pi$ be a set of independent simple roots $\{\alpha_1, \dots, \alpha_n\}$ in $\mathfrak{h}^*$, such that
\begin{equation}
    \langle h_i,\alpha_j \rangle =a_{ij}.
\end{equation}
Then there exists a Lie algebra $\mathfrak{g}(A)=\mathfrak{n}_- \oplus \mathfrak{h} \oplus \mathfrak{n}_+$, such that $\mathfrak{n}_+$ is generated by elements $e_1,\dots, e_n$ and $\mathfrak{n}_-$ is generated by elements $f_1,\dots, f_n$ with relations
\begin{equation}
    [e_i,f_j]=\delta_{ij} h_i, \quad [h,h']=0, \quad [h,e_i]=\alpha_i(h)e_i, \quad [h,f_i]=-\alpha_i(h)f_i
\end{equation}
for $h,h'\in \mathfrak{h}$ and
\begin{equation}
    ad_{e_i}^{1-a_{ij}}(e_j)=0, \quad ad_{f_i}^{1-a_{ij}}(f_j)=0.
\end{equation}
Those are called the Chevalley-Serre generators and relations respectively. The constructed Lie algebra is called the Kac-Moody Lie algebra associated to the generalized Cartan matrix $A$ (\cite{Kac}).\\


\subsection{General linear groups and algebras.} The general linear group $GL_n(\CC)$ is the group of invertible matrices of size $n$ over $\CC$. The standard choice of a maximal torus $T_n$ of $GL_n(\CC)$ is the subgroup of diagonal matrices and the standard choice of a Borel subgroup $B_n$ is the subgroup of upper-triangular matrices. This yields the following description of the Lie algebra $\mathfrak{gl}_n(\CC)$ of $GL_n(\CC)$ and its root system:
\begin{equation}
    \mathfrak{gl}_n(\CC) = \mathfrak{n}_- \oplus \mathfrak{h} \oplus \mathfrak{n}_+, \quad \mathfrak{n}_-=\text{span}\langle E_{ij} \rangle_{1\le j<i \le n}, \quad  \mathfrak{n}_+=\text{span}\langle E_{ij} \rangle_{1\le i<j \le n}, 
\end{equation}
\begin{equation}
    \mathfrak{h}=\text{span}\langle E_{ii} \rangle_{1\le i \le n}, \quad \mathfrak{h}^*=\text{span}\langle \theta_i \rangle_{1\le i\le n},
\end{equation}
\begin{equation}
    \theta_i(E_{jj})=\delta_{ij}, \quad R=\{\theta_{i}-\theta_j|i\neq j\}, \quad R^+=\{\theta_i -\theta_j|i<j\},
\end{equation}
\begin{equation}
    \Pi=\{\theta_{i}-\theta_{i+1}\},\quad \Pi^{\vee}=\{E_{i,i}-E_{i+1,i+1}\},
\end{equation}
where $E_{ij}$ are the elementary matrices. We make the following choice for the standard Chevalley generators of $\mathfrak{gl}_n$:
\begin{equation}
    f_{i}=E_{i+1,i},\quad   e_{i}=E_{i,i+1}, \quad 1\le i\le n-1.
\end{equation}

All irreducible representations of $GL_n(\CC)$ (or, equivalently, integrable irreducible representations of $\mathfrak{gl}_n(\CC)$) are parameterized by $n$-tuples of integers $(\lambda_1,\dots,\lambda_n)$ such that $\lambda_i \ge \lambda_{i+1}$. If $V$ is the tautological representation of $GL_n(\CC)$ then any such representation can be tensored with the one-dimensional representation $\Lambda^n V$ several times, so that $\lambda$ becomes a partition of length not greater than $n$. The resulting representation may be realized via a Schur functor $\mathbb{S}^{\lambda}$ applied to $V$.
\subsection{The Deligne category.} The Deligne category $\Rep(GL,T)$ for a variable $T$ and the field $\CC$ is the Karoubi closure of the additive closure of the free rigid monoidal $\CC[T]$-linear category generated by an object $V$ of dimension $T$. For non-negative integers $n,m$ the endomorphism algebra of an object $V^{\otimes n} \otimes V^{*\otimes m}$ is the walled Brauer algebra $Br_{n,m}(T)$ over $\CC[T]$ (\cite{CW}).\\

For any element $t$ of $\CC$ we may specialize the category $\Rep(GL,T)$ to $T=t$. The resulting $\CC$-linear category $\Rep(GL_t)$ is also usually called a Deligne category (\cite{DM}). If $t$ is not an integer then $\Rep(GL_t)$ is abelian and semisimple (\cite{CW}[Theorem 4.8.1]). For integer $t$ it is only Karoubian (\cite{CW}).\\

Indecomposable objects $V_{[\lambda,\mu]}$ of $\Rep(GL_t)$ are parameterized by bi-partitions $(\lambda,\mu)$ and are obtained by applying appropriate idempotents to $V^{\otimes |\lambda|}\otimes V^{*\otimes |\mu|}$. For any positive integer $n\in \mathbb{N}$ the category $\Rep(GL_n)$ admits a full monoidal functor $F$ to the category Rep$(GL_n)$ of algebraic representations of $GL_n$ which sends $V$ to the tautological representation $V=\CC^n$ of $GL_n$ and $V_{[\lambda,\mu]}$ to the simple representation $V_{\lambda,\mu}$ in $\mathbb{S}^{\lambda} (\CC^n)\otimes \mathbb{S}^{\mu}(\CC^{n*})$ with the largest (w.r.t. the standard partial order on the root lattice) highest weight if $l(\lambda)+l(\mu) \le n$ where $\mathbb{S}^{\lambda}$ is the Schur functor corresponding to $\lambda$. If $l(\lambda)+l(\mu)>n$, then $F(V_{[\lambda,\mu]})=0$. From now on we will assume the notation $t=\dim V$ and distinguish between objects of $\Rep(GL_t)$ and objects of ${\rm Rep}(GL_t)$ depending on the context.\\

Let $GL_t$ be the fundamental group of $\Rep(GL_t)$ (\cite{D, ERT}). The Lie algebra $\mathfrak{gl}_t$ (or $\mathfrak{gl}(V)$) of $GL_t$ is
\begin{equation}
    \mathfrak{gl}_t= V\otimes V^*.
\end{equation}
Note that $\mathfrak{gl}_t$ is an associative algebra via the evaluation map, therefore it is also a Lie algebra (\cite{ERT}).\\

It is also possible to define the analogue $\Rep(O_t)$ of the Deligne categories in the case of the orthogonal group (\cite{Etc}) which is a $\mathbb{C}$-linear tensor category generated by an object $V$ of dimension $t$ with a symmetric isomorphism $V\xrightarrow{\sim} V^*$. This category is abelian and semisimple for $t\in \mathbb{C}\setminus \mathbb{Z}$ and Karoubian for $t\in \mathbb{Z}$. All irreducible objects $V_{[\lambda]}$ of $\Rep(O_t)$ are parametrized by partitions $\lambda$ and correspond to the respective summand of $\mathbb{S}^{\lambda}V $. For a positive integer $t$ $\Rep(O_t)$ admits a full monoidal functor $F$ to Rep$(O_t)$. If $t$ is negative even (negative odd), then after a change of the symmetry morphism by a sign (so that $V$ is treated as an odd object) $\Rep(O_t)$ admits a similar functor to the category of representations Rep$(Sp_{-t})$ of the symplectic group (Rep$(Osp(1|1-t))$ respectively).

\subsection{Skein categories.}
Similarly to the classical case, we have means to interpolate the category of representations of quantum groups. Namely, in the case of $U_q(\mathfrak{gl}_n)$ when $q$ is not a root of unity, the so-called oriented Skein category $\Rep_q(GL_t)$ is defined in \cite{Br}. It is the additive Karoubi envelope of a strict monoidal $\mathbb{C}$-linear category (although it can be defined over any field $\Bbbk$) generated by an object $X_q$ with its right dual $\leftindex^*X_q$ and maps between tensor products of $X_q$ and $\leftindex^*X_q$ are given by framed oriented tangles subject to relations
\begin{equation}
\vcenter{\hbox{\begin{tikzpicture}
\coordinate (A) at (0,-1);
\coordinate (B) at (-0.6,0.4);
\coordinate (C) at (0,1);
\coordinate (D) at (-0.6,-0.4);
\draw [red,line width=1pt] (A) to[out=90,in=0] (B);
\draw [red,,line width=1pt] (B) + (0:0) arc (90:270:0.4);
\draw [white,line width=5pt] (D) to[out=0,in=-90] (C);
\draw [->,red,line width=1pt] (D) to[out=0,in=-90] (C);
\end{tikzpicture}}}=
\vcenter{\hbox{\begin{tikzpicture}
\coordinate (A) at (0,-1);
\coordinate (B) at (0.6,0.4);
\coordinate (C) at (0,1);
\coordinate (D) at (0.6,-0.4);
\draw [red, line width=1pt] (B) + (0:0) arc (90:-90:0.4);
\draw [->,red, line width=1pt] (D) to[out=180,in=-90] (C);
\draw [white,line width=5pt] (A) to[out=90,in=180] (B);
\draw [red, line width=1pt] (A) to[out=90,in=180] (B);
\end{tikzpicture}}}= q^t\cdot
\vcenter{\hbox{\begin{tikzpicture}
\draw [->,red, line width=1pt] (0,-1) to (0,1);
\end{tikzpicture}}}\, , \quad
\end{equation}
\begin{equation}
\vcenter{\hbox{\begin{tikzpicture}[
every braid/.style={
 line width=1pt,
braid/strand 1/.style={red},
braid/strand 2/.style={red},
}
]
\pic [red, line width=1pt] (coords) {braid={s_1 s_1^{-1}}};
\coordinate (A) at  (coords-1-s);
\coordinate (B) at  (coords-2-s);
\coordinate (C) at  (coords-1-e);
\coordinate (D) at  (coords-2-e);
\draw[->,red, line width=1pt] (A) to[out=90,in=-90] (A);
\draw[->,red, line width=1pt] (B) to[out=90,in=-90] (B);
\end{tikzpicture}
}}=
\vcenter{\hbox{\begin{tikzpicture}
\draw[->,red, line width=1pt] (0,-1.5) to[out=90,in=-90] (0,1);
\draw[->,red, line width=1pt] (0.5,-1.5) to[out=90,in=-90] (0.5,1);
\end{tikzpicture}
}}\, , \quad
\vcenter{\hbox{\begin{tikzpicture}[
every braid/.style={
 line width=1pt,
braid/strand 1/.style={red},
braid/strand 2/.style={red},
}
]
\pic [red, line width=1pt] (coords) {braid={s_1^{-1} s_1}};
\coordinate (A) at  (coords-1-s);
\coordinate (B) at  (coords-2-s);
\coordinate (C) at  (coords-1-e);
\coordinate (D) at  (coords-2-e);
\draw[->,red, line width=1pt] (A) to[out=90,in=-90] (A);
\draw[->,red, line width=1pt] (B) to[out=90,in=-90] (B);
\end{tikzpicture}
}}=
\vcenter{\hbox{\begin{tikzpicture}
\draw[->,red, line width=1pt] (0,-1.5) to[out=90,in=-90] (0,1);
\draw[->,red, line width=1pt] (0.5,-1.5) to[out=90,in=-90] (0.5,1);
\end{tikzpicture}
}}\, , \quad
\vcenter{\hbox{\begin{tikzpicture}[
every braid/.style={
 line width=1pt,
braid/strand 1/.style={->,red},
braid/strand 2/.style={red},
}
]
\pic [red, line width=1pt] (coords) {braid={s_1 s_1^{-1}}};
\coordinate (A) at  (coords-1-s);
\coordinate (B) at  (coords-2-s);
\coordinate (C) at  (coords-1-e);
\coordinate (D) at  (coords-2-e);
\draw[->,red, line width=1pt] (B) to[out=90,in=-90] (B);
\end{tikzpicture}
}}=
\vcenter{\hbox{\begin{tikzpicture}
\draw[->,red, line width=1pt] (0,1) to[out=-90,in=90] (0,-1.5);
\draw[->,red, line width=1pt] (0.5,-1.5) to[out=90,in=-90] (0.5,1);
\end{tikzpicture}
}}\, , \quad
\vcenter{\hbox{\begin{tikzpicture}[
every braid/.style={
 line width=1pt,
braid/strand 1/.style={red},
braid/strand 2/.style={->,red},
}
]
\pic [red, line width=1pt] (coords) {braid={s_1^{-1} s_1}};
\coordinate (A) at  (coords-1-s);
\coordinate (B) at  (coords-2-s);
\coordinate (C) at  (coords-1-e);
\coordinate (D) at  (coords-2-e);
\draw[->,red, line width=1pt] (A) to[out=90,in=-90] (A);
\end{tikzpicture}
}}=
\vcenter{\hbox{\begin{tikzpicture}
\draw[->,red, line width=1pt] (0,-1.5) to[out=90,in=-90] (0,1);
\draw[->,red, line width=1pt] (0.5,1) to[out=-90,in=90] (0.5,-1.5);
\end{tikzpicture}
}}\, ,
\end{equation}
\begin{equation}
\vcenter{\hbox{\begin{tikzpicture}[
every braid/.style={
 line width=1pt,
braid/strand 1/.style={red},
braid/strand 2/.style={red},
braid/strand 3/.style={red},
}
]
\pic [red, line width=1pt] (coords) {braid={s_1 s_2 s_1}};
\coordinate (A) at  (coords-1-s);
\coordinate (B) at  (coords-2-s);
\coordinate (C) at  (coords-3-s);
\draw[->,red, line width=1pt] (A) to[out=90,in=-90] (A);
\draw[->,red, line width=1pt] (B) to[out=90,in=-90] (B);
\draw[->,red, line width=1pt] (C) to[out=90,in=-90] (C);
\end{tikzpicture}
}}=
\vcenter{\hbox{\begin{tikzpicture}[
every braid/.style={
 line width=1pt,
braid/strand 1/.style={red},
braid/strand 2/.style={red},
braid/strand 3/.style={red},
}
]
\pic [red, line width=1pt] (coords) {braid={s_2 s_1 s_2}};
\coordinate (A) at  (coords-1-s);
\coordinate (B) at  (coords-2-s);
\coordinate (C) at  (coords-3-s);
\draw[->,red, line width=1pt] (A) to[out=90,in=-90] (A);
\draw[->,red, line width=1pt] (B) to[out=90,in=-90] (B);
\draw[->,red, line width=1pt] (C) to[out=90,in=-90] (C);
\end{tikzpicture}
}}\, ,
\end{equation}
\begin{equation}
\vcenter{\hbox{\begin{tikzpicture}
\draw[->,red, line width=1pt] (1,0) to (0,1);
\draw[white, line width=5pt] (0,0) to (1,1);
\draw[->,red, line width=1pt] (0,0) to (1,1);
\end{tikzpicture}
}}-
\vcenter{\hbox{\begin{tikzpicture}
\draw[->,red, line width=1pt] (0,0) to (1,1);
\draw[white, line width=5pt] (1,0) to (0,1);
\draw[->,red, line width=1pt] (1,0) to (0,1);
\end{tikzpicture}
}}=
z\cdot \vcenter{\hbox{\begin{tikzpicture}
\draw[->,red, line width=1pt] (0,0) to (0,1);
\draw[->,red, line width=1pt] (0.5,0) to (0.5,1);
\end{tikzpicture}
}}\, , \quad 
\vcenter{\hbox{\begin{tikzpicture}
\draw[->,red, line width=1pt] (0,0) arc (360:0:0.5);
\end{tikzpicture}
}}= \frac{q^t-q^{-t}}{q-q^{-1}}\cdot \text{Id},
\end{equation}
where we choose a branch of $q^t$. The composition rule is given by the concatenation of respective tangles.\\

All indecomposable objects of $\Rep_q(GL_t)$ are again parameterized by bipartitions. The category $\Rep_q(GL_t)$ is semisimple iff $q$ is not a root of unity and $q^t\neq \pm q^{n}$ for $n\in \mathbb{Z}$.\\

It is also possible to define the Birman-Wenzl-Murakami category $\Rep_q(O_t)$ which interpolates representation category of $U_q(\mathfrak{so}_n)$ by taking the additive Karoubi envelope of a strict monoidal $\mathbb{C}$-linear category generated by a self dual object $X_q$ with symmetric isomorphism $X_q \xrightarrow{\sim} X_q^{*}$ and morphisms given by tangles subject to BWM relations (e.g. see \cite{Ba} and \cite{LZ15}).

\section{KZ equations and dynamical differential equations}
\subsection{Knizhnik-Zamolodchikov equations.} Consider the category $\Rep (GL_t)$ for a complex $t$. For integer $m,n\ge 0$ we may consider the Casimir operators
\begin{equation}\label{casimirdef}
    \Omega_{ij}: V^{*\otimes n}\otimes V^{\otimes m }\rightarrow V^{*\otimes n}\otimes V^{\otimes m }, \quad \Omega_{ij}=\Omega_{ji}
\end{equation}
which act in $i,j$ tensor components via a flip if $i,j\le n$ or $i,j> n$, and via $-\text{coev}\circ \text{ev}$ for other $i,j$. Here, ev$:V\otimes V^* \rightarrow \bbone$ and coev$:  \bbone\rightarrow V\otimes V^*$ are the evaluation and coevaluation maps.\\

We define the Knizhnik-Zamolodchikov connection on the base space
\begin{equation}
    \{(z_1,\dots,z_{n+m})\in \mathbb{C}^{n+m}|z_i\neq z_j \text{ for } i\neq j\}
\end{equation}
with values in $\Hom_{\Rep(GL_t)}(V_{[\lambda,\mu]}, V^{*\otimes n}\otimes V^{\otimes m })$
\begin{equation}
    \nabla_{KZ}(\hbar) = d -\hbar \sum_{i<j} \frac{dz_i -dz_j}{z_i-z_j}\Omega_{ij},
\end{equation}
where $\hbar\in \CC$ and the action of $\Omega_{ij}$ on $V^{*\otimes n}\otimes V^{\otimes m }$ is extended to endomorphisms of
\linebreak
$\Hom_{\Rep(GL_t)}(V_{[\lambda,\mu]}, V^{*\otimes n}\otimes V^{\otimes m })$. We may assume $|\lambda|+n=|\mu|+m$, otherwise 
\begin{equation}\label{Hom-spaces}
    \Hom_{\Rep(GL_t)}(V_{[\lambda,\mu]}, V^{*\otimes n}\otimes V^{\otimes m })=0.
\end{equation}

\begin{ex}[\cite{EV}]\label{Sm}
In the case when $\lambda,\mu =0$ and $m=n$ we can describe $\Omega_{ij}$ explicitly: note that $\Hom_{\Rep(GL_t)}(\bbone, V^{*\otimes m}\otimes V^{\otimes m})= \mathbb{C}[S_m]$, so for $1\le i< j\le 2m$ and $\sigma \in S_m$ we have
\begin{equation}\label{OmegaSm}
    \Omega_{ij}\sigma =
    \begin{cases}
        (i,j)\circ \sigma , & i,j\le m\\
        \sigma \circ (i-m,j-m), & i,j> m\\
        -t \sigma, & \sigma(j-m)=i, i\le m<j\\
        -(i,\sigma(j-m)) \circ \sigma, & \sigma(j-m)\neq i, i\le m<j
    \end{cases}
\end{equation}
\end{ex}

Since the vector space of homomorphisms $\Hom_{\Rep(GL_t)}(V_{[\lambda,\mu]}, V^{*\otimes n}\otimes V^{\otimes m })$ has the same dimension for all non-integer and all large enough integer $t=\dim V$, it is sufficient for us to consider the setup for $\mathfrak{gl}_t, t\in \mathbb{N}$ - for large $t$ we have an isomorphism
\begin{equation}\label{Deligne-Classic}
    F:\Hom_{\Rep(GL_t)}(V_{[\lambda,\mu]}, V^{*\otimes n}\otimes V^{\otimes m }) \xrightarrow{\sim} \Hom_{\mathfrak{gl}_t}(V_{\lambda,\mu}, V^{*\otimes n}\otimes V^{\otimes m }).
\end{equation}
Here $V_{\lambda,\mu}$ is the irreducible $\mathfrak{gl}_t$ representation of highest weight
\begin{equation}
    (\lambda_1, \lambda_2, \dots,0,\dots, 0, \dots, -\mu_2,-\mu_1 ),
\end{equation}
where the first coordinates are the coordinates of $\lambda$, the last coordinates are the coordinates of $-\mu$, and the coordinates in between are all zeros.\\

For large positive integer $t$ we also have
\begin{equation}\label{thespace}
    \Hom_{\mathfrak{gl}_t}(V_{[\lambda,\mu]}, V^{*\otimes n}\otimes V^{\otimes m })\cong \Hom_{\mathfrak{gl}_t}(V_{\lambda,\mu}\otimes (\Lambda^t V)^{\otimes n}, (\Lambda^{t-1}V)^{\otimes n}\otimes V^{\otimes m }).
\end{equation}

\subsection{$(\mathfrak{gl}_t ,\mathfrak{gl}_{n+m})$ duality.} Let $t$ be a positive integer and $\mathfrak{gl}_t$ the corresponding general linear Lie algebra. In this section we derive a duality between the KZ equations for $\mathfrak{gl}_t$ and dynamical differential equations for $\mathfrak{gl}_{n+m}$, via the joint action of $\mathfrak{gl}_t$ and $\mathfrak{gl}_{n+m}$ on the space $\Lambda^{\bullet} (V \otimes W),$ where $V,W$ are the tautological representations for $\mathfrak{gl}_t$ and $\mathfrak{gl}_{n+m}$ respectively. The derivation is similar to \cite{TL}.\\

The space in \eqref{thespace} can be given the structure of a weight space of a $\mathfrak{gl}(n+m)$-module. Namely, consider the space $\Lambda^{\bullet} (V \otimes W)$, which inherits the action of $\mathfrak{gl}(V)\oplus \mathfrak{gl}(W)$. Recall the following 

\begin{thm}[Skew-Howe duality,\cite{How}]
    As a $\mathfrak{gl}(V)\oplus \mathfrak{gl}(W)$-module
\begin{equation}\label{skew-Howe}
    \Lambda^{\bullet} (V \otimes W) = \bigoplus_{\delta, \ l(\delta) \le t, l(\delta^\top) \le n+m} V_{\delta} \otimes W_{\delta^\top},
\end{equation}
where $V_{\delta},W_{\delta^{\top}}$ are the irreducible representations of $\mathfrak{gl}(V)$ and $\mathfrak{gl}(W)$ of weights $\delta$ and $\delta^{\top}$ respectively. In this section we define $\delta^{\top}$ as the transpose of the partition $\delta$. The sum is over all partitions $\delta$ satisfying the conditions above.
\end{thm}

From the action of the Cartan subalgebra of $\mathfrak{gl}(W)$ it follows that given the standard weight basis for $W$, for any non-negative integer weight $\delta=(\delta_1,\dots,\delta_{n+m})$ we have an isomorphism of $\mathfrak{gl}(V)$-module and $\mathfrak{gl}(W)$-weight space of weight $\delta$:
\begin{equation}
    \Lambda^{\delta_1}V\otimes \dots \otimes \Lambda^{\delta_{n+m}}V \cong \Lambda^{\bullet}(V\otimes W)[\delta].
\end{equation}

In particular, we have an embedding $(\Lambda^{t-1}V)^{\tensor n} \tensor V^{\tensor m} \injto \Lambda^\bullet(V\tensor W)$ whose image is the subspace of $\mathfrak{gl}(n+m)$ weight 
\begin{equation}
    \beta := (\underbrace{t-1, \dots,t-1}_{n \text{ times}}, \underbrace{1, \dots,1}_{m \text{ times}}).
\end{equation}
Therefore, if $\mu_1\le n, \lambda_1\le m$ (otherwise the space \eqref{thespace} is $0$) we have an embedding
\begin{align}
    \Hom_{\mathfrak{gl}_t}(V_{\lambda,\mu}\otimes (\Lambda^t V)^{\otimes n}, & (\Lambda^{t-1}V)^{\otimes n}\otimes V^{\otimes m }) \notag \\ & \cong \Hom_{\mathfrak{gl}_t}(V_{\lambda,\mu}\otimes (\Lambda^t V)^{\otimes n}, \Lambda^{\bullet} (V \otimes W)[\beta])\cong W_{\gamma^\top}[\beta]
\end{align}
where $\gamma$ is the highest weight of the $\mathfrak{gl}_t$-module $V_{\lambda,\mu} \tensor (\Lambda^t V)^{\tensor n}$, 
\begin{equation}
    \gamma := (\underbrace{n+\lambda_1, n+\lambda_2, \dots, n-\mu_2, n-\mu_1}_{t\text{ entries}}).
\end{equation}


Let us take the standard bases $v_a\in V$ and $w_{i}\in W$ for $1 \le a \le t, 1 \le i \le n+m$ and form a basis $x_{a,i}=v_a\otimes w_i$ of $V\otimes W$. As a $\mathfrak{gl}_t$-module, the space $\Lambda^{\bullet} (V\otimes W)$ is isomorphic to
\begin{equation}
    \Lambda^{\bullet}[x_{1,1},\dots,x_{t,1}] \otimes \cdots \otimes \Lambda^{\bullet} [x_{1,n+m},\dots,x_{t,n+m}].
\end{equation}
The $\mathfrak{gl}_t$ Casimir operators $\Omega_{ij}$ (as in \eqref{casimirdef}) act on this space as 
\begin{equation}\label{omegadef}
    \Omega_{ij} = \sum_{a} (e_a)_{(i)} (e^a)_{(j)}
\end{equation}
where $\{e_a\}, \{e^a\}$ are dual bases of $\mathfrak{gl}_t$, and the outside subscripts $(i)$ indicate action on the $i$-th factor of the tensor product. As $\Lambda^{\bullet}(V\otimes W)$ is a $\mathfrak{gl}_{n+m}$-module, it carries an action by the operators $\kappa_{ij}$ for $1 \le i,j \le n+m$, $i \neq j$ defined by
\begin{equation}
    \kappa_{ij} := e_\alpha e_{-\alpha} + e_{-\alpha} e_{\alpha}
\end{equation}
where $\alpha$ is the root $\theta_i - \theta_j$ of $\mathfrak{gl}_{n+m}$ and $e_{\pm \alpha}$ are the corresponding root vectors $E_{ij}, E_{ji}$ respectively from $\mathfrak{g}_{\pm\alpha}\subset \mathfrak{gl}_{n+m}$ normalized by Tr$(e_{\alpha}e_{-\alpha})=1$.\\

Let $E_{ij}, \ 1 \le i,j \le n+m,$ be the standard basis of $\mathfrak{gl}_{n+m}$.

\begin{lem}\label{actionequality}
    For any $1 \le i < j \le n+m$, the equality
    \begin{equation}
        2\Omega_{ij} = - \kappa_{ij} + E_{ii} + E_{jj}
    \end{equation}
    holds as operators on $\Lambda^{\bullet} (V\otimes W)$.
\end{lem}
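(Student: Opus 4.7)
My plan is to give a direct computation in a fermionic realization of $\Lambda^{\bullet}(V\otimes W)$. Introduce creation and annihilation operators $\psi_{a,k}, \psi^*_{a,k}$ for $1\le a\le t$, $1\le k\le m+n$, with canonical anticommutation relations
$$\{\psi_{a,k},\psi^*_{b,l}\} = \delta_{ab}\delta_{kl}, \qquad \{\psi_{a,k},\psi_{b,l}\}=0=\{\psi^*_{a,k},\psi^*_{b,l}\},$$
where $\psi_{a,k}$ creates the basis vector $x_{a,k}=v_a\otimes w_k$. A direct check shows that the $\mathfrak{gl}_{m+n}$-action is implemented by $E_{pq}=\sum_{a}\psi_{a,p}\psi^*_{a,q}$, and that under the tensor factorization $\Lambda^{\bullet}(V\otimes W)\cong\bigotimes_k\Lambda^{\bullet}[x_{1,k},\dots,x_{t,k}]$ the matrix unit $e_{ab}\in\mathfrak{gl}_t$ acting on the $k$-th factor is $\psi_{a,k}\psi^*_{b,k}$. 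The anticommuting nature of the $\psi$'s automatically accounts for all Koszul signs between factors, which is the reason this realization is preferable to a bare tensor-product computation.

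With these formulas in hand, using the trace-form dual basis $\{e_{ba}\}$ of $\{e_{ab}\}$ in $\mathfrak{gl}_t$, the KZ Casimir becomes
$$\Omega_{ij}=\sum_{a,b}\psi_{a,i}\psi^*_{b,i}\psi_{b,j}\psi^*_{a,j}.$$
For $\kappa_{ij}$ I will expand $E_{ij}E_{ji}+E_{ji}E_{ij}$ in fermions and use $\{\psi^*_{a,j},\psi_{b,j}\}=\delta_{ab}$ to reduce each summand:
$$E_{ij}E_{ji}=\sum_{a,b}\psi_{a,i}\psi^*_{a,j}\psi_{b,j}\psi^*_{b,i}=E_{ii}-\sum_{a,b}\psi_{a,i}\psi_{b,j}\psi^*_{a,j}\psi^*_{b,i},$$
and analogously $E_{ji}E_{ij}=E_{jj}-\sum_{a,b}\psi_{a,j}\psi_{b,i}\psi^*_{a,i}\psi^*_{b,j}$.

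The remaining task is to identify each of these quartic remainders with $\Omega_{ij}$. For the first one, anticommuting $\psi^*_{a,j}\psi^*_{b,i}=-\psi^*_{b,i}\psi^*_{a,j}$ and then anticommuting $\psi^*_{b,i}$ past $\psi_{b,j}$ (possible because $i\ne j$) recovers exactly $\Omega_{ij}$ up to signs. For the second, a relabeling $a\leftrightarrow b$ combined with anticommutation of both the $\psi$'s and the $\psi^*$'s among themselves shows it equals the first. Summing yields $\kappa_{ij}=E_{ii}+E_{jj}-2\Omega_{ij}$, which is the desired identity.

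The only real obstacle is bookkeeping: keeping track of signs when moving fermion operators past each other, and verifying that the two quartic remainders above coincide after the index swap. Everything else is a routine consequence of the CAR algebra. As an alternative, one could verify the identity on the simplest generators by hand — say on decomposable vectors of the form $x_{a,i}\wedge x_{b,j}$ — but this only confirms it on small weight components, whereas the fermionic computation is uniform in the multidegree and therefore gives the statement on the whole of $\Lambda^{\bullet}(V\otimes W)$ at once.
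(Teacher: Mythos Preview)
Your proof is correct and is essentially the same computation as the paper's: the paper writes the Clifford generators as $x_{a,k}$ (left multiplication) and $\partial_{a,k}$ (odd derivation), which are precisely your $\psi_{a,k}$ and $\psi^*_{a,k}$, and then uses the same anticommutation relation to reduce $\kappa_{ij}$ to $-2\Omega_{ij}+E_{ii}+E_{jj}$. The only cosmetic difference is the order of the moves---the paper anticommutes the middle pair first and reads off $-\Omega_{ij}$ plus a diagonal term directly, whereas you first extract $E_{ii},E_{jj}$ and then identify each quartic remainder with $\Omega_{ij}$---but the content is identical.
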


\begin{proof}
    The action of $\Omega_{ij}$ on $\Lambda^{\bullet} (V\otimes W)$ can be written as
    \begin{equation}
        \sum_{1 \le a,b \le t} x_{a,i} \partial_{b,i} x_{b,j} \partial_{a,j}
    \end{equation}
    where $x_{r,c}$ and $\partial_{r,c}$ are the operators of multiplication and differentiation by $x_{r,c}$ (with appropriate powers of $-1$). Similarly, the action of $\kappa_{ij}$ is 
    \begin{equation}
        \sum_{1 \le a,b \le t} (x_{a,i} \partial_{a,j} x_{b,j} \partial_{b,i} + x_{b,j} \partial_{b,i} x_{a,i} \partial_{a,j}).
    \end{equation}
    
    In view of the anticommutation relation $x_{a,i} \partial_{b,j} + x_{b,j} \partial_{a,i} = \delta_{a,b} \delta_{i,j}$, we have
    \begin{align}
        \kappa_{ij} &= \sum_{1 \le a,b \le t} x_{a,i} (-x_{b,j} \partial_{a,j} + \delta_{a,b}) \partial_{b,i} + x_{b,j} (-x_{a,i} \partial_{b,i} + \delta_{a,b}) \partial_{a,j} \\
        &= -2\Omega_{ij} + \sum_{1 \le a \le t} (x_{a,i}\partial_{a,i} + x_{a,j}\partial_{a,j}) = -2\Omega_{ij} + E_{ii} + E_{jj}
    \end{align}
    as desired.
\end{proof}

For arbitrary weights $\lambda,\mu$ Let $M_{\lambda, \mu}$ be the subspace of $\Lambda^{\bullet} (V\otimes W)$ with $\mathfrak{gl}_t$-weight $\lambda$ and $\mathfrak{gl}_{n+m}$-weight $\mu$. As a consequence of the above lemma, we have the following theorem.

\begin{thm}\label{duality}
    A function $f: \{(z_1,\cdots , z_{n+m}) \in \mathbb{C}^{n+m} \mid z_i \neq z_j\} \to M_{\lambda, \mu}$ is a flat section of the KZ connection
    \begin{equation}
        \nabla_{KZ} =  d - \hbar\sum_{1 \le i < j \le n+m} \frac{dz_i-dz_j}{z_i - z_j} \Omega_{ij}
    \end{equation}
    if and only if the function $g = f \cdot \prod_{1 \le i < j \le n+m} (z_i - z_j)^{-(\mu_i+\mu_j)\hbar /2}$ is a flat section of the connection
    \begin{equation}
        \nabla_{\kappa} := d + \frac{\hbar}{2}\sum_{1 \le i < j \le n+m} \frac{dz_i-dz_j}{z_i - z_j} \kappa_{ij}.
    \end{equation}
    Additionally, by using the gauge transformation $\nabla_{\kappa} \to h\nabla_{\kappa}h^{-1}$ where
    \begin{equation}
        h=\exp{\left(\frac{\hbar}{2}\sum_{1\le i<j\le n+m} (\mu_i-\mu_j)\log(z_i-z_j)\right)}
    \end{equation}
    we can change the $\nabla_{\kappa}$ connection to the dynamical connection $\nabla_{D}$ as in \cite{FMTV}:
    \begin{equation}\label{dynamicalconnection}
        \nabla_D = d+ \hbar \sum_{1 \le i < j \le n+m} \frac{dz_i-dz_j}{z_i - z_j} E_{ji}E_{ij}.
    \end{equation}
\end{thm}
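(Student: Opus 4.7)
The plan is to reduce both halves of the theorem to \lemref{actionequality}, using that on the weight space $M_{\lambda,\mu}$ the Cartan generators $E_{ii}$ act as the scalars $\beta_i$, while $\kappa_{ij}$ and $e_{-\alpha}e_\alpha$ preserve weights and hence commute with any scalar function of the $z_i$. With this observation the two claims become routine gauge manipulations.

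For the first equivalence I would rewrite \lemref{actionequality} on $M_{\lambda,\mu}$ in the form $\Omega_{ij} = -\tfrac{1}{2}\kappa_{ij} + \tfrac{1}{2}(\beta_i+\beta_j)$ and substitute into the definition of $\nabla_{KZ}$. This expresses $\nabla_{KZ}$ as $\nabla_\kappa$ plus the scalar $1$-form
\begin{equation*}
-\tfrac{\hbar}{2}\sum_{i<j}(\beta_i+\beta_j)\,\frac{dz_i-dz_j}{z_i-z_j} \;=\; d\log\prod_{i<j}(z_i-z_j)^{-(\beta_i+\beta_j)\hbar/2},
\end{equation*}
and this exact correction is absorbed precisely by the substitution $f = g\cdot\prod_{i<j}(z_i-z_j)^{(\beta_i+\beta_j)\hbar/2}$, yielding $\nabla_{KZ}f=0 \Leftrightarrow \nabla_\kappa g=0$.

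For the second claim the algebraic input is $[e_\alpha,e_{-\alpha}] = E_{ii}-E_{jj}$ for $\alpha = \theta_i-\theta_j$, which gives
\begin{equation*}
\tfrac{1}{2}\kappa_{ij} \;=\; e_{-\alpha}e_\alpha + \tfrac{1}{2}(E_{ii}-E_{jj}) \;=\; e_{-\alpha}e_\alpha + \tfrac{1}{2}(\beta_i-\beta_j)
\end{equation*}
on $M_{\lambda,\mu}$. Since $h$ is scalar-valued on this weight space and $\kappa_{ij}$ preserves weights, one has $h\kappa_{ij}h^{-1}=\kappa_{ij}$ and $(dh)h^{-1} = \tfrac{\hbar}{2}\sum_{i<j}(\beta_i-\beta_j)\tfrac{dz_i-dz_j}{z_i-z_j}$; the latter exactly cancels the $\tfrac{\hbar}{2}(\beta_i-\beta_j)$ contribution pulled out of $\tfrac{1}{2}\kappa_{ij}$, so $h\nabla_\kappa h^{-1} = \nabla_D$.

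The real work is already contained in \lemref{actionequality}; after that the argument is essentially bookkeeping. The only subtle point worth flagging is that both $\prod_{i<j}(z_i-z_j)^{-(\beta_i+\beta_j)\hbar/2}$ and $h$ must be treated as \emph{scalar} functions on $M_{\lambda,\mu}$, which is precisely what allows the Cartan corrections to cancel cleanly against the $d\log$ terms without leaving a residual operator.
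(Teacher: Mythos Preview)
Your proposal is correct and follows essentially the same approach as the paper. The paper's own proof is the one-line ``A straightforward computation'' for the first part, and for the second part records exactly the identity you use, $e_{\alpha}e_{-\alpha}+e_{-\alpha}e_{\alpha}=2e_{-\alpha}e_{\alpha}+h_{\alpha}$ with $h_{\alpha}$ acting on the weight space by $\beta_i-\beta_j$; you have simply written out the bookkeeping that the paper leaves implicit.
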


\begin{proof}
    A straightforward computation. For the second part, note that
    \begin{equation}
        e_{\alpha}e_{-\alpha}+e_{-\alpha}e_{\alpha}=E_{ij}E_{ji}+E_{ji}E_{ij}=2E_{ji}E_{ij} +h_{\alpha}
    \end{equation}
    and $h_{\alpha}$ acts on $M_{\lambda,\mu}$ by $\mu_i-\mu_j$, where $\alpha = \theta_i -\theta_j$.
\end{proof}

In the the following lemma we assume that $t$ is either a non-integer or a large enough integer number. However, in the actual proof we will only need to know it for large enough integer numbers, because we can apply interpolation argument.

\begin{lem}\label{isomor}
    For all non-integer and large enough integer $t$ we have an isomorphism
    \begin{equation}\label{iso-lem}
        \phi :\Hom_{\Rep(GL_t)}(V_{[\lambda,\mu]}, V^{*\otimes n}\otimes V^{\otimes m }) \cong W_{\gamma^\top}[\beta],
    \end{equation}
    where $W_{\gamma^\top}$ is the unique (infinite-dimensional for generic $t$) irreducible $\mathfrak{gl}_{n+m}$-module of the highest weight $\gamma^\top$.
\end{lem}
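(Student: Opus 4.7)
The plan is to establish the isomorphism first for all sufficiently large positive integer $t$ and then extend to generic (non-integer) $t$ by a polynomial interpolation argument in $t$. For large integer $t$ most of the argument is already sketched in the discussion preceding the lemma. Namely, one combines the functor \eqref{Deligne-Classic} with the twist by $(\Lambda^t V)^{\otimes n}$ from \eqref{thespace} to rewrite the left-hand side as $\Hom_{\mathfrak{gl}_t}(V_\gamma,(\Lambda^{t-1}V)^{\otimes n}\otimes V^{\otimes m})$, where $V_\gamma=V_{\lambda,\mu}\otimes(\Lambda^t V)^{\otimes n}$ is irreducible of highest weight $\gamma$. One then embeds $(\Lambda^{t-1}V)^{\otimes n}\otimes V^{\otimes m}$ as the $\mathfrak{gl}_{m+n}$-weight-$\beta$ subspace of $\Lambda^\bullet(V\otimes W)$, applies skew-Howe duality \eqref{skew-Howe} to identify the $V_\gamma$-isotypic component as $V_\gamma\otimes W_{\gamma^\top}$, and invokes Schur's lemma to obtain $\Hom_{\mathfrak{gl}_t}(V_\gamma,(\Lambda^{t-1}V)^{\otimes n}\otimes V^{\otimes m})\cong W_{\gamma^\top}[\beta]$. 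Assembling these identifications yields $\phi$ for large integer $t$.

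To extend to non-integer $t$, I would argue that both sides depend polynomially on $t$ (for fixed $\lambda,\mu,m,n$) and already agree on an infinite set of integer points. On the left, general Deligne-category theory shows that $\dim\Hom_{\Rep(GL,T)}(V_{\lambda,\mu},V^{*\otimes n}\otimes V^{\otimes m})$ is a polynomial in $T$, constant on the cofinite set of generic values, and equal to the classical-$\mathfrak{gl}_t$ value by \eqref{Deligne-Classic} for all large enough integer $T$. On the right, the weight $\gamma^\top$ is a partition whose first $n-\mu_1$ parts equal $t$ while the remaining parts are independent of $t$, and $\beta$ likewise depends linearly on $t$; the irreducible highest-weight $\mathfrak{gl}_{m+n}$-module $W_{\gamma^\top}$ is finite-dimensional for $t$ a large integer and an infinite-dimensional parabolic quotient otherwise, but in either case its character can be expressed via the Weyl/BGG formula, from which $\dim W_{\gamma^\top}[\beta]$ comes out as a polynomial in $t$ outside a finite set of resonant values. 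Polynomial equality on a cofinite set then forces equality for all non-integer $t$.

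The main obstacle is precisely this last polynomial-interpolation step on the right-hand side, because the category-theoretic nature of $W_{\gamma^\top}$ jumps between integer and non-integer $t$. A cleaner way around this is to enhance $\phi$ to an isomorphism of modules over the walled Brauer algebra $Br_{n,m}(T)$: the left-hand side carries the $Br_{n,m}(T)$-action tautologically, while $W_{\gamma^\top}[\beta]$ inherits a compatible action through the identification used in Theorem~\ref{duality} (with $\Omega_{ij}$ expressed in terms of $\kappa_{ij}$). Working over $\CC(T)$ and comparing characters of these two $Br_{n,m}(T)$-modules reduces the problem to the integer case already handled, and the resulting isomorphism over the generic fiber specializes to every non-integer and sufficiently large integer $t$.
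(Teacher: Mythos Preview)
Your handling of large integer $t$ is correct and matches the paper's setup. The gap lies in the interpolation step, which you yourself flag as the main obstacle, and neither of your proposed fixes closes it. The BGG/Weyl route is not developed: the module $W_{\gamma^\top}$ jumps from a finite-dimensional irreducible at large integer $t$ to an infinite-dimensional parabolic Verma at non-integer $t$, and these have genuinely different character formulas; showing they agree at the weight $\beta$ amounts to checking that the extra singular vectors present at integer $t$ all sit strictly below $\beta$, which is exactly what remains to be proved. The walled-Brauer fallback is circular: Lemma~\ref{actionequality} only produces the Casimir-type operators $\Omega_{ij}$ on $W_{\gamma^\top}[\beta]$ via $\kappa_{ij}$, not the full Brauer generators (permutations and contractions), and it does so on $\Lambda^\bullet(V\otimes W)$, i.e.\ for integer $t$; transporting a $Br_{n,m}(T)$-module structure to $W_{\gamma^\top}[\beta]$ for generic $T$ presupposes the very identification you are constructing.

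The paper's proof supplies the missing mechanism directly. It first notes that $\gamma^\top-\beta$ is independent of $t$, so the parabolically induced module from the block parabolic with Levi $\mathfrak{gl}_n\oplus\mathfrak{gl}_m$ (hence $W_{\gamma^\top}$ for non-integer $t$) has constant weight-$\beta$ dimension. To match this with large integer $t$ it argues that the PBW relations in $W_{\gamma^\top}[\beta]$ are literally $t$-independent, by embedding, for each large integer $t$, both sides of \eqref{iso-lem} into $(\Lambda^{\gamma_1}W\otimes\cdots\otimes\Lambda^{\gamma_t}W)[\beta]$ and observing that the image lands in a fixed finite-dimensional subspace (the $S_{t-|\lambda|-|\mu|}$-invariants among the middle $\Lambda^n W$ factors). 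The change-of-basis matrix between the diagram basis on the left and the PBW basis on the right is then a fixed rational function of $t$, which both proves the lemma and yields the explicit $\phi$ needed downstream in Theorem~\ref{thm_isomor}; a bare dimension count would not have sufficed for that application.
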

\begin{proof}
    In this proof we will refer to the RHS or the LHS of the equation \eqref{iso-lem}. Note that for the specified $t$ the dimension of the LHS is the same as the dimension of the same space for some large enough integer $t$. The dimension of the RHS is constant due to the following parabolic induction consideration. The module $W_{\gamma^{\top}}$ can be obtained by a quotient of Ind$_{\mathfrak{p}}^{\mathfrak{gl}(n+m)} (W_{\gamma^{\top}_n}\otimes W_{\gamma^{\top}_m})$ where $\mathfrak{p}$ is the block upper-triangular Lie subalgebra that contains $\mathfrak{gl}(n)\oplus \mathfrak{gl}(m)$ and $W_{\gamma^{\top}_n}$ or $W_{\gamma^{\top}_m}$ are finite-dimensional irreducible $\mathfrak{gl}(n)$ or respectively $\mathfrak{gl}(m)$ modules with highest weights $\gamma_n^{\top}:=(\gamma^\top_1,\dots, \gamma^\top_n)$ and $\gamma_m^{\top}:=(\gamma^\top_{n+1},\dots, \gamma^\top_{n+m})$. The action of the nilpotent part of $\mathfrak{p}$ on $W_{\gamma^{\top}_n}\otimes W_{\gamma^{\top}_m}$ is trivial.\\
    
    For the choice of the LHS basis let us consider a projection
    \begin{equation}
        \pi: \Hom_{\Rep(GL_t)}(V^{\otimes|\lambda|}\otimes V^{*\otimes |\mu|}, V^{*\otimes n}\otimes V^{\otimes m }) \twoheadrightarrow \Hom_{\Rep(GL_t)}(V_{[\lambda,\mu]}, V^{*\otimes n}\otimes V^{\otimes m }).
    \end{equation}
    Recall the set of spanning $(w,w')-$diagrams from the bigger space as in \cite{CW} for a variable $T$. Since $\mathbb{C}[T]$ is a PID, it follows from the properties of the walled Brauer algebra that the LHS in \eqref{iso-lem} admits a set of vectors polynomial in $T$ such that it forms a basis after taking a quotient $T=t$ for $t\in \mathbb{C}\setminus \mathbb{Z}$ or for large enough integer $t$.\\
    
    Let us fix a basis of the corresponding weight space from the PBW-spanning set on the RHS. Note that the relations on the PBW vectors from the RHS are independent of $t$. Indeed, otherwise it would mean that we have a singular vector above $\beta$ in the Verma module $M_{\gamma^{\top}}$ whose coefficients necessarily depend on $t$. This in turn would imply the same fact for all large integer $t$, but with this assumption all the relations on the PBW vectors are independent of $t$ due to the consideration of the embedding below. In particular, it is clear that if a subset from the set of spanning PBW vectors is a basis for some $t$ as in the lemma, then it will also be a basis for the same weight space for all non-integer or large enough integer $t$ because the weights of the singular vectors of the corresponding Verma module sitting above $\beta$ are all the same for such $t$.\\
    
    For a large integer $t$ we may associate the space \eqref{thespace} with the space of $\mathfrak{gl}(V)$ highest weight vectors of the $\mathfrak{gl}(V)\oplus \mathfrak{gl}(W)$-weight $(\gamma,\beta)$ in $\Lambda^{\bullet}(V\otimes W)$. We may embed both spaces for a large integer $t$ into the $(\gamma,\beta)$-weight space of $\Lambda^{\bullet}(V\otimes W)$. In turn, it can be viewed as the space
    \begin{equation}\label{W-decomp}
        (\Lambda^{\gamma_1} W \otimes \cdots \otimes \Lambda^{\gamma_t} W) [\beta].
    \end{equation}
    The highest weight vector of $\Lambda^{\gamma_1} W \otimes \cdots \otimes \Lambda^{\gamma_t} W$ is already $\mathfrak{gl}(V)$-singular when it is embedded back into $\Lambda^{\bullet}(V\otimes W)$, so if we want to get the image of $\phi$ from \eqref{iso-lem} in \eqref{W-decomp}, it is sufficient for us to apply chains of $\mathfrak{gl}(W)$ lowering operators to this vector, so that we arrive in the correct weight space of weight $\beta$. The space in \eqref{W-decomp} has a spanning set
    \begin{equation}\label{W-basis}
        w_J = f_{j_1^1}\dots f_{j_{k_1}^1}w_1 \otimes \cdots \otimes f_{i_1^t}\dots f_{j_{k_t}^t}w_t,
    \end{equation}
    for various sequences $J=(j_1^1,\dots,j_{k_1}^1;\dots;i_1^t,\dots, j_{k_t}^t)$ and where $w_i$ are the highest weight vectors in $\Lambda^{\gamma_i}W$ and 
    \begin{equation}
        \text{wt}(f_{j_1^1}\dots f_{j_{k_1}^1}\dots f_{j_1^t}\dots f_{j_{k_t}^t}) = \gamma^\top-\beta.
    \end{equation}
    
    The coefficients in terms of \eqref{W-basis} of the basis from the LHS will be rational in $t$ and the coefficients of the basis from the RHS will be constant. We want to produce the matrix of the basis change from the RHS to the LHS. However, when $t\rightarrow +\infty$ the image of the LHS/RHS spaces lies in the subspace of a fixed (independent of $t$) finite dimension:
    \begin{equation}
        (\Lambda^{\gamma_1}W \otimes \cdots \Lambda^{\gamma_{|\lambda|}}W \otimes (\Lambda^n W \otimes \cdots \otimes \Lambda^n W)^{S_{t-|\lambda|-|\mu|}} \otimes \Lambda^{t-|\mu|+1} W \otimes \cdots \otimes \Lambda^{t} W) [\beta],
    \end{equation}
    where $S_{t-|\lambda|-|\mu|}$ acts by permutations of the tensor factors. Therefore, the matrix of the basis change has fixed rational coefficients in $t$ and we can identify the spaces from \eqref{iso-lem} for $t\in \CC\setminus \{-M,\dots,-1,0,1,\dots, M\}$ for some integer $M$.
\end{proof}

\begin{rem}
    Note that the weights $\beta, \gamma^{\top}$ are well defined for arbitrary $t$ as opposed to the weight $\gamma$. Indeed, in order to obtain $\gamma^\top$ we only have to modify the weight $(\underbrace{t,\dots,t}_{n \text{ times}})$ by adding or subtracting integer components of $\lambda^{\top},\mu^{\top}$ at the respective parts of $(t^n)$. This duality trick allows us to incorporate all $t$-dependence into $\beta $ and $ \gamma^{\top}$.
\end{rem}

We have the following consequence of \lemref{isomor}.

\begin{thm}\label{thm_isomor}
    The isomorphism $\phi$ in \eqref{iso-lem} identifies the dynamical connection on $W_{\gamma^\top}[\beta]$ and the KZ connection on
    $\Hom_{\Rep(GL_t)}(V_{[\lambda,\mu]}, V^{*\otimes n}\otimes V^{\otimes m })$. In particular, if 
    \begin{equation}
        f: \{(z_1,\cdots , z_{n+m}) \in \mathbb{C}^{n+m} \mid z_i \neq z_j\} \to W_{\gamma^\top}[\beta] 
    \end{equation}
    is a flat section of the dynamical connection, then
    \begin{equation}
        \prod_{1 \le i < j \le n+m} (z_i - z_j)^{-(\beta_i+\beta_j)\hbar /2} \cdot h\cdot  \phi^{-1}(f)=\prod_{1 \le i < j \le n+m} (z_i - z_j)^{-\beta_j\hbar } \cdot \phi^{-1}(f)
    \end{equation}
    is a flat section of $\nabla_{KZ}$.
\end{thm}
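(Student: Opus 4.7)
The plan is to first establish the claim at a large positive integer $t$ by combining the skew-Howe picture of the previous subsection with \thmref{duality}, and then extend to all $t$ admissible in \lemref{isomor} by a rationality-in-$t$ argument.

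At a large positive integer $t$, the isomorphism $\phi$ is realized concretely: by \eqref{Deligne-Classic} and \eqref{thespace}, the Hom space is identified with $\Hom_{\mathfrak{gl}_t}(V_{\lambda,\mu}\otimes (\Lambda^t V)^{\otimes n}, (\Lambda^{t-1}V)^{\otimes n}\otimes V^{\otimes m})$, and the embedding $(\Lambda^{t-1}V)^{\otimes n}\otimes V^{\otimes m}\hookrightarrow \Lambda^\bullet(V\otimes W)$ introduced before \eqref{skew-Howe} identifies this with the subspace of $\mathfrak{gl}(V)$-highest-weight vectors of $\mathfrak{gl}(V)\oplus\mathfrak{gl}(W)$-weight $(\gamma,\beta)$ in $\Lambda^\bullet(V\otimes W)$, which by \eqref{skew-Howe} is exactly $W_{\gamma^\top}[\beta]$. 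Under this realization the KZ Casimirs $\Omega_{ij}$ on the Hom space are carried to the operators \eqref{omegadef} on the image, since both are the $\mathfrak{gl}_t$-Casimirs in dual-basis form; moreover both commute with the $\mathfrak{gl}_t$-action, so the identification descends to the highest-weight subspace. \lemref{actionequality} then rewrites $\Omega_{ij}$ as $\tfrac{1}{2}(E_{ii}+E_{jj}-\kappa_{ij})$, and on the weight-$\beta$ subspace $E_{ii}+E_{jj}$ acts by the scalar $\beta_i+\beta_j$; by \thmref{duality} the scalar gauge $\prod(z_i-z_j)^{-(\beta_i+\beta_j)\hbar/2}$ converts $\nabla_{KZ}$ to $\nabla_\kappa$, and the further gauge by $h$ converts $\nabla_\kappa$ to $\nabla_D$ on $W_{\gamma^\top}[\beta]$. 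The product of the two scalar factors equals $\prod(z_i-z_j)^{-\beta_j\hbar}$, which is the displayed formula.

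To remove the large-$t$ hypothesis I would invoke \lemref{isomor}: the bases constructed there on the two sides of \eqref{iso-lem} have a change-of-basis matrix whose entries are rational in $t$, and the matrix entries of $\Omega_{ij}$ and of $e_{-\alpha}e_\alpha$ in these bases are rational in $t$ as well. Since the identification of connections has been verified for all large enough integer $t$, it persists at every $t$ for which $\phi$ is defined, proving the first assertion. The explicit flat-section formula is then an immediate consequence.

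The step I expect to require the most bookkeeping is the verification, at large integer $t$, that the $\Omega_{ij}$ of \eqref{casimirdef}, defined through $\text{ev}$ and $\text{coev}$, coincide, after the twist $\Lambda^{t-1}V\cong V^*\otimes \Lambda^tV$ over the $n$ dual factors, with the formula \eqref{omegadef} on $\Lambda^\bullet(V\otimes W)$; the signs arising from the anticommutation of $x_{a,i}$ and $\partial_{a,i}$ in the proof of \lemref{actionequality} must be matched against the $-\text{coev}\circ\text{ev}$ sign convention used for the mixed tensor factors.
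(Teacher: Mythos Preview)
Your proposal is correct and follows essentially the same route as the paper: establish the identification at large integer $t$ via the skew-Howe picture and \thmref{duality}, then interpolate in $t$ using the polynomial/rational dependence of the operators in the bases constructed in \lemref{isomor}. The paper's own proof is extremely terse (it records only the interpolation step, observing that $e_{-\alpha}e_\alpha$ acts polynomially in $t$ in a PBW basis and coincides with the truncated Casimir for large integers), so your version is in fact a more fleshed-out rendition of the same argument; the bookkeeping concern you flag about matching the $-\text{coev}\circ\text{ev}$ sign to the Clifford presentation is real but routine.
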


\begin{rem}
According to this theorem, the KZ connection on $\Hom_{\Rep(GL_t)}(V_{\lambda,\mu}, 
V^{*\otimes n}\otimes V^{\otimes m })$
in the Deligne category $GL_t$  corresponds to the dynamical connection on the 
$\mathfrak{gl}_{m+n}$-module $W_{\gamma^\top}$ whose highest weight is determined by the Deligne parameter $t$.
\end{rem}

\begin{proof}
    Since we know that $E_{ji}E_{ij}$ act as truncated Casimirs on $W_{\gamma^\top}[\beta]$ for all sufficiently large integers $t$ and this action is polynomial in $t$ (in terms of a PBW basis), it follows that $E_{ji}E_{ij}$ will still act as truncated Casimirs for all non-integer and large enough integer $t$.
\end{proof}

\section{Solutions to dynamical differential equations}
\subsection{Integral formulas.}\label{definesols}
In this section we make the same assumptions for $t$ as in the Lemma \ref{isomor}. We wish to write explicit integral formulas for the flat sections of the $\nabla_{KZ}(\hbar)$ connection on $\Hom$-spaces \ref{Hom-spaces}. Due to Theorem \ref{duality} it suffices to find flat sections of the dynamical connection \eqref{dynamicalconnection} for the Lie algebra $\mathfrak{gl}_{n+m}$ and the weight space $W_{\gamma^\top}[\beta]$. Explicitly, we are looking for solutions $u: \{(z_1,\dots , z_{n+m}) \in \mathbb{C}^{n+m} \mid z_i \neq z_j\} \to W_{\gamma^\top}[\beta]$ to the equations
\begin{equation}
    d u = -\hbar \sum_{1 \le i < j \le n+m} \frac{dz_i-dz_j}{z_i-z_j}e_{-\alpha} e_{\alpha} u
\end{equation}
where $\alpha$ is the root $\theta_i - \theta_j$ of $\mathfrak{gl}_{n+m}$ and $e_{\pm \alpha}$ are the corresponding normalized root vectors.\\

From \cite{FMTV}[Theorems 3.1, 3.2] and \cite{SV}, we have integral solutions to these equations, which we will now describe. Let $f_i = E_{i+1,i} \in \mathfrak{gl}_{n+m}$ for $1 \le i \le n+m-1$ be the standard lowering operators; associated with them are the simple roots $\alpha_i = \theta_i - \theta_{i+1}$. Write $\gamma^\top - \beta$ as a sum of simple roots $\lambda = \sum_{i=1}^{n+m-1} m_i \alpha_i$ for some $m_i \in \mathbb{Z}_{\ge 0}$ (note that $\gamma^\top - \beta$ stabilizes for generic $t$, so the $m_i$ do too). Let $\ol{m} = \sum_{i=1}^{n+m-1} m_i$, and let $c$ be the unique non-decreasing function $\{1,\dots,\ol{m}\} \to \{1,\dots,n+m-1\}$ such that $|c^{-1}(i)| = m_i$ for all $1 \le i \le n+m-1$.\\

Let $x\in \CC$ and $t_1,\dots,t_{\ol{m}}$ be the coordinates on the space 
\begin{equation}
    (\CC\setminus \{x\})^{\ol{m}} \setminus \bigcup_{1\le i<j \le \ol{m}} \{t_i=t_j\}.
\end{equation}
For permutations $\sigma \in S_{\ol{m}}$ define the differential $\ol{m}$-forms 
\begin{equation}
    \omega_{\sigma}(x) = d\log(t_{\sigma(1)}-t_{\sigma(2)}) \wedge \cdots \wedge d\log(t_{\sigma(\ol{m}-1)}-t_{\sigma(\ol{m})}) \wedge d\log(t_{\sigma(\ol{m})}-x), \quad d=d_t, \quad \omega_{\sigma}:=\omega_{\sigma}(0).
\end{equation}
Also define the operator $f_{c(\sigma)} := f_{c(\sigma(1))} \cdots f_{c(\sigma(\ol{m}))}$.
Let $v$ denote the highest weight vector in $W_{\gamma^\top}$. The $W_{\gamma^\top}[\beta]$-valued differential $\ol{m}$-form $\omega$ is defined as
\begin{equation}\label{omega_form}
    \omega(t_1,\dots,t_{\ol{m}},x) = \sum_{\sigma \in S_{\ol{m}}} (-1)^{|\sigma|} \omega_{\sigma}(x) f_{c(\sigma)}v.
\end{equation}
We can also define $\omega=\omega(t_1,\dots,t_{\ol{m}},0)$.\\

Let us introduce the master function
\begin{equation}
    \widetilde{\Phi}(x):= \prod_{1 \le i \le \ol{m}} (t_i-x)^{-(\alpha_{c(i)},\gamma^\top)} \prod_{i < j} (t_i - t_j)^{(\alpha_{c(i)}, \alpha_{c(j)})}, \quad \Phi:=\widetilde{\Phi}(0).
\end{equation}

\begin{thm}[\cite{FMTV, SV}]\label{FMTV_sol}
    For any appropriate cycle 
    \begin{equation}
        \Gamma\in H_{\overline{m}}((\CC\setminus \{x\})^{\overline{m}} \setminus \bigcup_{1\le i<j \le \overline{m}} \{t_i=t_j\})
    \end{equation}
    the sections
    \begin{equation}
        u(z_1,\dots,z_{n+m},x):=\int_{\Gamma} \exp{\left(\hbar\left(\sum_{i=1}^{\ol{m}} \left(z_{c(i)}-z_{c(i)+1}\right) t_i - \langle \gamma^{\top}, z\rangle x\right)\right)}\widetilde{\Phi}^{-\hbar}(x) \omega(t,x)
    \end{equation}
    satisfy both the trivial Knizhnik-Zamolodchikov connection (in the single variable $x$)
\begin{equation}
    d+\hbar z, \quad d=d_x
\end{equation}
where we view $z=(z_1,\dots,z_{n+m})$ as an element of the standard Cartan subalgebra of $\mathfrak{gl}_{n+m}$, and the dynamical equations for 
\begin{equation}\label{full_dynamical_connection}
    \nabla_D':= d_z +\hbar \left(\sum_{i=1}^{n+m}\beta_i x dz_i +\sum_{1\le i<j\le n+m}\frac{dz_i-dz_j}{z_i-z_j}e_{-\alpha}e_{\alpha} \right).
\end{equation}
\end{thm}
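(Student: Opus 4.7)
The plan is to verify the theorem by direct differentiation under the integral sign, following the FMTV and Schechtman--Varchenko template: our integral is the instance of their general hypergeometric formula in which all input modules collapse to a single Verma module $W_{\gamma^\top}$, with the exponential and the spectral parameter $x$ playing the role of the extra ``trivial KZ'' variable. Accordingly, I would split the argument into two parts, treating the $x$-equation and the $z$-equations separately, and in each part reduce the required identity to a vanishing of certain $d_t$-exact pieces on the cycle $\Gamma$.

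For the trivial KZ equation $\partial_x u = -\hbar \langle\beta,z\rangle u$ (recall $z$ acts on $W_{\gamma^\top}[\beta]$ by the scalar $\langle\beta,z\rangle$), I would differentiate the three $x$-dependent pieces of the integrand: the exponential (contributing $-\hbar\langle\gamma^\top,z\rangle$), the master function $\widetilde{\Phi}^{-\hbar}(x)$, and the form $\omega(t,x)$ through its last factor $d\log(t_{\sigma(\overline m)}-x)$. The discrepancy $\hbar(\langle\gamma^\top,z\rangle-\langle\beta,z\rangle)=\hbar\sum_i\langle \alpha_{c(i)},z\rangle=\hbar\sum_i(z_{c(i)}-z_{c(i)+1})$ then has to be absorbed: integration by parts in the $t_i$ variables converts the linear-in-$t$ term $\sum_i(z_{c(i)}-z_{c(i)+1})t_i$ of the exponent into logarithmic derivatives of $\widetilde\Phi$ and $\omega$ which, by Stokes' theorem on the cycle $\Gamma$, cancel the unwanted contributions and supply exactly the missing $-\hbar\langle\beta,z\rangle u$.

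For the dynamical equations, differentiation in $z_k$ brings down the polynomial $\hbar\bigl(\sum_{i:c(i)=k}t_i - \sum_{i:c(i)+1=k}t_i - \gamma^\top_k x\bigr)$ from the exponential. The strategy is to convert these $t_i$-polynomial prefactors into the rational combinations $(z_k-z_j)^{-1}$ demanded by $\nabla_D'$ by a second integration by parts against $\widetilde{\Phi}^{-\hbar}$, which produces residues at the hyperplanes $t_i=t_j$; these residues are then matched with the action of $e_{-\alpha}e_{\alpha}$ on the symmetrized form $\omega$ by means of the Schechtman--Varchenko formula, which expresses the action of PBW monomials $f_{i_1}\cdots f_{i_{\overline m}}$ on a Verma module through signed symmetrization over $S_{\overline m}$.

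The main obstacle will be this final combinatorial matching: one must check that the sign factors $(-1)^{|\sigma|}$, the PBW monomials $f_{c(\sigma)}v$, and the pairings $(\alpha_{c(i)},\alpha_{c(j)})$ appearing in $\widetilde\Phi$ conspire to reproduce exactly the normalization $\operatorname{Tr}(e_\alpha e_{-\alpha})=1$ used in the definition of $e_{\pm\alpha}$, and that the $\gamma^\top_k x$ term combines with the $e_{-\alpha}e_\alpha$ residues to give the scalar $\beta_k x$ appearing in $\nabla_D'$. Once this bookkeeping is carried out, as in \cite{SV, FMTV}, the direct computation closes and the flatness of $u$ with respect to both connections is established.
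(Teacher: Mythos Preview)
The paper does not give its own proof of this theorem: it is stated as a citation to \cite{FMTV, SV} and used as a black box, with only the specialization to $x=0$ (the subsequent Corollary) and the spanning argument (Theorem~\ref{solthm}) worked out in the text. So there is no in-paper proof to compare your proposal against.

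That said, what you have outlined is precisely the shape of the argument in the cited references: differentiate under the integral, convert the resulting polynomial-in-$t$ prefactors into logarithmic derivatives via integration by parts, discard $d_t$-exact pieces by Stokes on $\Gamma$, and identify the surviving residue terms with the Lie-algebra action through the Schechtman--Varchenko symmetrization formula for $\omega$. Your description of the dynamical half is accurate in spirit. For the ``trivial KZ'' half your bookkeeping is slightly off: in the one-point case the $x$-equation is verified more directly---the translation $t_i\mapsto t_i+x$ turns $\widetilde\Phi(x)$ and $\omega(t,x)$ into $\Phi$ and $\omega$, and the Jacobian of this shift in the exponential produces exactly the weight shift $\langle\gamma^\top-\beta,z\rangle$ you need, without a separate integration-by-parts step. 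The genuine combinatorial work, as you correctly anticipate, lives entirely in matching the $e_{-\alpha}e_\alpha$ action with the residues of $\widetilde\Phi^{-\hbar}\omega$; this is the content of the relevant lemmas in \cite{SV}.
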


We have the following consequence of this theorem.

\begin{cor}
    The sections $u(z_1,\dots,z_{n+m},0)$ satisfy the dynamical equations for $\nabla_D$ from \thmref{duality}.
\end{cor}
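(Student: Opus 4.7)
The plan is to deduce this corollary directly from \thmref{FMTV_sol} by specialization of the parameter $x$ at $x=0$. The key observation is a comparison between the full dynamical connection
\begin{equation*}
\nabla_D' = d_z + \hbar\Bigl(\sum_{i=1}^{m+n}\beta_i x\, dz_i + \sum_{1\le i<j\le m+n}\frac{dz_i-dz_j}{z_i-z_j}e_{-\alpha}e_{\alpha}\Bigr)
\end{equation*}
appearing in \thmref{FMTV_sol} and the dynamical connection $\nabla_D$ from \thmref{duality}: they differ precisely by the one-form $\hbar \sum_i \beta_i x\, dz_i$, which vanishes identically on the slice $\{x=0\}$.

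Concretely, I would first write $\nabla_D' u(z,x) = 0$ as an identity of $W_{\gamma^\top}[\beta]$-valued one-forms in $dz_1,\dots,dz_{m+n}$, guaranteed by \thmref{FMTV_sol} for any admissible cycle $\Gamma$. Next, I would restrict this identity to the hyperplane $x=0$. On the left-hand side, since $d_z$ differentiates only with respect to the $z$-variables, we have $(d_z u)(z,0) = d_z\bigl(u(z,0)\bigr)$. On the right-hand side, the tautological term $\hbar \sum_i \beta_i x\, dz_i$ evaluates to zero at $x=0$, while the Casimir sum $\sum_{i<j}\frac{dz_i - dz_j}{z_i - z_j} e_{-\alpha}e_{\alpha}$ is unchanged, because it acts only on the $W_{\gamma^\top}[\beta]$-valued part and does not depend on $x$ as an operator on the target space. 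Combining these two observations gives $\nabla_D\, u(z,0) = 0$, which is exactly the claim.

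The only subtlety is verifying that the restriction $x\mapsto 0$ commutes with $d_z$ on the integral $u(z,x)$. This requires that $u(z,x)$ be jointly smooth in $(z,x)$ near $x=0$, which I would check by inspecting the integrand: the master function $\widetilde\Phi(x)^{-\hbar}$ and the form $\omega(t,x)$ are manifestly regular in $x$ in a neighborhood of $0$ (provided the $t_i$ are bounded away from $0$ along $\Gamma$, which can be arranged by an appropriate choice of cycle), and the exponential factor is entire in $x$, so dominated convergence lets us interchange $\partial_{z_i}$ and $|_{x=0}$. I do not anticipate any genuine obstacle — the corollary is essentially a one-line consequence of \thmref{FMTV_sol} once the $x=0$ specialization of $\nabla_D'$ is identified with $\nabla_D$.
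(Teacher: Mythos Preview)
Your proposal is correct and is exactly the argument the paper has in mind: the corollary is stated as an immediate consequence of \thmref{FMTV_sol} with no proof given, because setting $x=0$ in $\nabla_D'$ kills the term $\hbar\sum_i \beta_i x\,dz_i$ and leaves precisely $\nabla_D$. Your extra care about commuting $d_z$ with the restriction $x\mapsto 0$ is more than the paper bothers to say, but it is the right thing to check and your justification is fine.
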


Let us fix an ordering $l:\{1,\dots,\ol{m}\} \to \{1,\dots,\ol{m}\}$ of the set $\{1,\dots,\ol{m}\}$. For each $l$ we construct a cycle in $\Gamma_l\in H_{\overline{m}}((\CC\setminus \{0\})^{\overline{m}} \setminus \bigcup_{1\le i<j \le \overline{m}} \{t_i=t_j\})$ given by the picture below,

\begin{center}
    \begin{tikzpicture}

  \filldraw (0,0) circle (2pt);
  \node at (0.2,0.2) {$0$};
  
  \draw[xshift=5,black!60!black,decoration={markings,mark=between positions 0.125 and 0.875 step 0.25 with \arrow{>}},postaction={decorate}] (5,-0.5) -- (-0.5,-0.5) arc (-90:-270:0.5) (-0.5,0.5) -- (5,0.5) node[above left] {$t_{l^{-1}(1)}$};

  \node at (3.5,1.5) {$\ddots$};

  \draw[xshift=5,black!60!black,decoration={markings,mark=between positions 0.125 and 0.875 step 0.25 with \arrow{>}},postaction={decorate}] (5,-1) -- (-1,-1) arc (-90:-270:1.5) (-1,2) -- (5,2) node[above left] {$t_{l^{-1}(\overline{m})}$};
\end{tikzpicture}

\upshape \textbf{Pic. 1.} Integration contours for $\Gamma_l$.
\end{center}

\begin{thm}\label{solthm}
     The sections of the form
\begin{equation}\label{intsols}
    u_l := \int_{\Gamma_l} \exp\left(\hbar\sum_{i=1}^{\ol{m}} \left(z_{c(i)}-z_{c(i)+1}\right) t_i\right)\Phi^{-\hbar} \omega
\end{equation}
span the space of solutions of the dynamical equations in $W_{\gamma^\top}[\beta]$. The integrals converge in the region ${\rm Re} (\hbar(z_i -z_{i+1}))<0$.
\end{thm}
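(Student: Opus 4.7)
The proof decomposes into two parts: verifying convergence of the integrals and establishing the spanning property of the collection $\{u_l\}_{l \in S_{\ol{m}}}$.

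For convergence, I would analyze the integrand along the unbounded direction of each contour. Each $\Gamma_l$ consists of nested keyhole loops in which every variable $t_{l^{-1}(k)}$ runs from $+\infty$ along the real axis, encircles $0$, and returns to $+\infty$. On this region, the master function $\Phi^{-\hbar}$ grows at most polynomially in $|t_i|$, whereas the exponential factor $\exp\bigl(\hbar \sum_i (z_{c(i)} - z_{c(i)+1}) t_i\bigr)$ decays exponentially provided ${\rm Re}(\hbar(z_{c(i)} - z_{c(i)+1})) < 0$ for each $i$. Since the image of $c$ lies in $\{1,\dots,n+m-1\}$, the hypothesis ${\rm Re}(\hbar(z_i - z_{i+1})) < 0$ covers all required directions of decay. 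Potential singularities of $\Phi^{-\hbar}$ at $t_i = 0$ and along diagonals $t_i = t_j$ are locally integrable for generic $\hbar$, and the keyhole construction keeps us on a single branch.

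For spanning, by \thmref{FMTV_sol} each $u_l$ is a solution of the dynamical connection, and the solution space has dimension $\dim W_{\gamma^\top}[\beta]$. I would analyze the leading asymptotic behavior of $u_l$ in the regime where all $\hbar(z_{c(i)} - z_{c(i)+1}) \to -\infty$. In this limit the Laplace method concentrates the integrand near $t_i = 0$, and the integral over the nested keyhole $\Gamma_l$ reduces at leading order to an iterated residue computation performed in the order prescribed by $l$. Each such iterated residue produces a PBW-type monomial $f_{c(l^{-1}(1))} f_{c(l^{-1}(2))} \cdots f_{c(l^{-1}(\ol{m}))} v$, multiplied by an explicit nonzero scalar depending on $\hbar$ and the weights. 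As $l$ varies over $S_{\ol{m}}$, these monomials form a spanning set of $W_{\gamma^\top}[\beta]$: this follows from the PBW theorem applied to the highest-weight module $W_{\gamma^\top}$ together with the fact that $\ol{m} = \sum m_i$ is exactly the number of simple-root lowerings needed to move from weight $\gamma^\top$ to weight $\beta$.

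The hard part will be the precise asymptotic analysis identifying the leading term of $u_l$ with the expected PBW monomial. Care must be taken with the signs $(-1)^{|\sigma|}$ in $\omega$, with cancellations among the different permutations in its defining sum, and with the exponential prefactors arising from the residues at $t_i = 0$. Once the leading asymptotics are controlled and the monomials $\{f_{c(l^{-1}(1))}\cdots f_{c(l^{-1}(\ol{m}))}v\}_{l \in S_{\ol{m}}}$ are shown to span $W_{\gamma^\top}[\beta]$, the spanning of the leading terms lifts to spanning of the original sections $u_l$ themselves, since a flat section is determined by its value (and in particular by its asymptotic class) at any point in the domain of convergence.
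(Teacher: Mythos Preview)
Your overall strategy---reduce the integral to an iterated residue that outputs a PBW monomial $f_{c(\sigma)}v$ for each $l$, then use that these span $W_{\gamma^\top}[\beta]$---is exactly the paper's. The difference is in how the reduction is carried out. You propose a Laplace-type asymptotic as $\hbar(z_i-z_{i+1})\to-\infty$; but the phase $\sum_i \hbar(z_{c(i)}-z_{c(i)+1})t_i$ is linear in $t$, so there is no interior saddle on which to concentrate, and you would still have to control the branch-cut discrepancy of $\Phi^{-\hbar}$ between the two tails of each keyhole contour. The paper sidesteps this entirely with the substitution $\hbar=\epsilon$, $z_i=z_i'/\epsilon$, $\epsilon\to 0$: the exponential survives as $\exp\bigl(\sum_i(z'_{c(i)}-z'_{c(i)+1})t_i\bigr)$ while $\Phi^{-\hbar}\to 1$, so the integrand becomes single-valued and meromorphic in each $t_i$. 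The two tails of every keyhole then cancel \emph{exactly}, and only the residue at $t_{l^{-1}(k)}=0$ survives at each step, yielding $u_l\big|_{\epsilon=0}=(-2\pi i)^{\ol m}\,f_{c(l^{-1}\circ w)}v$ with $w$ the longest element (your ordering is reversed, which is harmless for spanning). This is an exact specialization rather than an asymptotic one; it eliminates precisely the analytic difficulties you flag as the ``hard part'' and makes the scalar in front independent of $\hbar$ and the weights.
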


\begin{proof}
    Consider a limit $\hbar=\epsilon, z_i = z_i'/\epsilon, \epsilon \to 0$ so that ${\rm Re}(z_i'-z_{i+1}')<0$. By deforming the contours of integration we may assume that both ``tails'' of each individual contour are close to the real line.
    

\begin{center}
    \begin{tikzpicture}

  \filldraw (0,0) circle (2pt);
  \node at (0.2,0.2) {$0$};
  \draw[thick,->] (-0.5,0) -- (7,0); 
  
  \draw[black!60!black,decoration={markings,mark=between positions 0.125 and 0.875 step 0.25 with \arrow{>}},postaction={decorate}] (5,-0.2) -- (0.4,-0.2) arc (-26.6:-333.4:0.44) (0.4,0.2) -- (5,0.2);
  \node at (5.8,0.3) {$t_{l^{-1}(1)}$};

  \node at (5.8,0.6) {$\dots$};

  \draw[black!60!black,decoration={markings,mark=between positions 0.125 and 0.875 step 0.25 with \arrow{>}},postaction={decorate}] (5,-0.4) -- (0.8,-0.4) arc (-26.6:-319:1) (0.65,0.71) -- (5,0.71);
  \node at (5.8,0.9) {$t_{l^{-1}(\overline{m})}$};
\end{tikzpicture}

 \textbf{Pic. 2.} Deformation of the cycle $\Gamma_l$.
\end{center}
 We may note that the integral $u_l(z_i',\epsilon)$ converges absolutely in $\epsilon$, so it is holomorphic in $\epsilon$ and we may consider $u_l(z_i',0)$. Assume for simplicity that we are working with only one term $\omega_{\sigma}$ of $\omega$. When we let $\epsilon=0$, the function under the integral \eqref{intsols} becomes holomorphic on $\CC^{\overline{m}} \setminus (\bigcup_{i< j}\{t_i=t_j\} \cup \bigcup_{i} \{t_i=0\})$. If we look at the function $u_l(z_i',0)$ without the integral over $t_{l^{-1}(1)}$, the resulting function $f_1(t_{l^{-1}(1)},z_i')$ is either holomorphic or meromorphic in $t_{l^{-1}(1)}$ with the only simple pole at $t_{l^{-1}(1)}=0$ depending on the last factor of $\omega_{\sigma}$ in the denominator. Therefore, if we perform the missing integration in $t_{l^{-1}(1)}$ and pinch two tails of integration from $a+i0+$ to $+\infty+i0+$ and back from $+\infty-i0+$ to $a-i0+$ where $a\in \mathbb{R}_{>0}$, they will cancel each other out. Thus the resulting integral computes the residue of $f_1(t_{l^{-1}(1)},z_i')$ at $t_{l^{-1}(1)}=0$. If $\omega_{\sigma}$ does not have a pole at $t_{l^{-1}(1)}=0$, then the integral is zero.\\

 This argument shows that we may algebraically compute the residue of the function
 \begin{equation}
     \exp{\left(\sum_{i=1}^{\overline{m}} (z_{c(i)}'-z_{c(i)+1}')t_i\right)} \frac{1}{(t_{\sigma(1)}-t_{\sigma(2)})(t_{\sigma(2)}-t_{\sigma(3)})\dots (t_{\sigma(\overline{m}-1)}-t_{\sigma(\overline{m})})t_{\sigma(\overline{m})}}
 \end{equation}
at $t_{l^{-1}(1)}=0$ and then perform the other $\overline{m}-1$ integrations. If $\sigma(\overline{m})=l^{-1}(1)$, the residue is equal to
\begin{equation}
    \exp{\left(\sum_{i=1}^{\overline{m}-1} (z_{c(i)}'-z_{c(i)+1}')t_i\right)} \frac{1}{(t_{\sigma(1)}-t_{\sigma(2)})(t_{\sigma(2)}-t_{\sigma(3)})\dots (t_{\sigma(\overline{m}-2)}-t_{\sigma(\overline{m}-1)})t_{\sigma(\overline{m}-1)}}.
\end{equation}

Now we can consider the same argument for the next variable $t_{l^{-1}(2)}$ and so on. From this we see that 
\begin{equation}
    \int_{\Gamma_l} \exp{\left(\sum_{i=1}^{\overline{m}} (z_{c(i)}'-z_{c(i)+1}')t_i\right)} \omega_{\sigma} = (-2\pi {\rm i})^{\overline{m}}\delta_{\sigma, l^{-1}\circ w},
\end{equation}
where $w(i)=\overline{m}+1-i, 1\le i \le \overline{m}$. Then we have
\begin{equation}
    u_l(z_i',0) = \int_{\Gamma_l} \sum_{\sigma \in S_{\ol{m}}} \omega_{\sigma} f_{c(\sigma)}v = (-2\pi {\rm i})^{\overline{m}}f_{c(l^{-1}\circ w)}v.
\end{equation}
The vectors $f_{c(\sigma)}v$ span $W_{\gamma^\top}[\beta]$, so the solutions \eqref{intsols} span the space of all solutions in $W_{\gamma^\top}[\beta]$.
\end{proof}

\begin{rem}
    For large integer $t$ there is a natural embedding
    \begin{equation}
        \varphi : W_{\gamma^\top}\to \Lambda^{\gamma_1}W \otimes \cdots \otimes \Lambda^{\gamma_t}W
    \end{equation}
    which sends the highest-weight vector of $W_{\gamma^\top}$ to the product of highest-weight vectors. One might also try to write solutions for large integer $t$ using Theorem 3.1 in \cite{FMTV} on the $\mathfrak{gl}_{n+m}$ weight space $\left(\Lambda^{\gamma_1}W \otimes \cdots \otimes \Lambda^{\gamma_t}W\right)[\beta]$. However, we can show that the solutions obtained in this way actually lie in the image of $\varphi$, and are in fact the same as the solutions obtained in Theorem \ref{solthm}. Explicitly, the ``new'' solutions are described as follows: let $P$ be the set of sequences $\sigma=(i_1^1,\dots,i_{s_1}^1;\dots ;i_1^t,\dots,i_{s_t}^{t})$ consisting of the numbers $1,\dots,\ol{m}$ arranged into $t$ rows. For each such sequence, define the differential form $\omega_{\sigma} = \omega_{i_1^1,\dots,i_{s_1}^1} \wedge \cdots \wedge \omega_{i_1^t,\dots,i_{s_t}^t}$ where $\omega_{i_1,\dots,i_s} := d\log(t_{i_1}-t_{i_2}) \wedge \dots \wedge d\log (t_{i_{s-1}}-t_{i_s})\wedge d\log (t_{i_s})$. Also define the vector $f_{\sigma}v := f_{c(i_1^1)} \cdots f_{c(i_{s_1}^1)} v_1 \tensor \cdots \tensor f_{c(i_1^t)} \cdots f_{c(i_{s_t}^t)} v_t$ where $v_j$ is the highest-weight vector in $\Lambda^{\gamma_j}W$. Then the ``new'' solutions are given by
    \begin{equation}\label{newintsols}
        u_l = \int_{\Gamma_l} \exp\left(\hbar\sum_{i=1}^{\ol{m}} \left(z_{c(i)}-z_{c(i)+1}\right) t_i\right)\Phi^{-\hbar} \widetilde{\omega}
    \end{equation}
    where
    \begin{equation}
        \widetilde{\omega} := \sum_{\sigma \in P} (-1)^{|\sigma|} \omega_{\sigma} f_{\sigma}v.
    \end{equation}
    Here $|\sigma|$ is the sign of the permutation associated to $\sigma$ -- note that the sequence \linebreak $(i_1^1,\dots,i_{s_1}^1,\dots ,i_1^t,\dots,i_{s_t}^{t})$ is related to a standard sequence $(1,\dots, \ol{m})$ by a unique permutation. By repeatedly using Lemma 7.4.4 from \cite{SV} and the formula for the action of a Lie algebra $\mathfrak{g}$ on a tensor product of $\mathfrak{g}$-modules, we can re-arrange terms in \eqref{newintsols}. This way we can see that the solutions \eqref{newintsols} are the same as \eqref{intsols}.
\end{rem}

\begin{ex}\label{explicitSm}
    As in Example \ref{Sm}, consider the case when $\lambda,\mu=0$ and $m=n$ so we have $\Hom_{\Rep(GL_t)}(\mathbbm{1}, V^{*\otimes m}\otimes V^{\otimes m}) \cong \CC[S_m]$. This is also identified with the $\mathfrak{gl}_{2m}$-weight space $W_{\gamma^\top}[\beta]$ where
    \begin{equation}
        \gamma^\top = (\underbrace{t,\dots,t}_{m \text{ times}},\underbrace{0,\dots,0}_{m \text{ times}}).
    \end{equation}
    The difference $\gamma^\top-\beta$ is written as the sum of simple roots $\sum_{i=1}^{2m-1} m_i \alpha_i$ where $m_i = m-|m-i|$, so our solutions involve $\ol{m} = m^2$ integrations.
\end{ex}

\subsection{Bethe ansatz}
The  Bethe ansatz is a method to find joint eigenvectors of the Gaudin Hamiltonians $H_i = \sum_{j \neq i} \frac{\Omega_{ij}}{z_i-z_j}$
 which appear on the right-hand side of the KZ equations. One obtains such eigenvectors from 
 the integral representations of solutions to KZ equations  by taking the limit $\hbar \to 0$
and using the steepest descend method, see \cite{RV}.\\
 
More precisely, for any non-degenerate critical point of the function
\begin{equation}
    \exp{\left( \hbar \sum_{i=1}^{\overline{m}}(z_{c(i)}-z_{c(i)+1})t_i \right)}\Phi^{-\hbar}
\end{equation}
the value of the differential form $\omega$ from \eqref{intsols} at this point is a joint eigenvector, see \cite{RV}.
Such eigenvectors are called the Bethe vectors.\\

The critical set of this function has  not
been analyzed yet. It  not clear if the critical points of this function are non-degenerate and if their number is big enough so that the corresponding Bethe vectors diagonalize the Gaudin  Hamiltonians. \\

Nevertheless, we may still prove that the joint spectrum of the Gaudin Hamiltonians  is  simple. \\


\begin{prop}
    The common spectrum of the Gaudin hamiltonians $H_i$ on 
    \begin{equation}\label{Hom-space}
        \Hom_{\Rep(GL_t)}(V_{[\lambda,\mu]}, V^{*\otimes n}\otimes V^{\otimes m })
    \end{equation}
    is simple for generic $t, z_i$.
\end{prop}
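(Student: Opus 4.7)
The plan is to transport the problem to the dynamical side using \thmref{thm_isomor} and then prove simplicity by an asymptotic genericity argument. Under the isomorphism $\phi$ of \lemref{isomor}, the Gaudin Hamiltonian $H_i=\sum_{j\ne i}\Omega_{ij}/(z_i-z_j)$ corresponds, by \lemref{actionequality} together with the gauge transformation of \thmref{duality}, to $-\widetilde H_i$ up to addition of a scalar function of $z$, where
\begin{equation*}
    \widetilde H_i := \sum_{j\ne i}\frac{e_{-\alpha_{ij}}e_{\alpha_{ij}}}{z_i-z_j}
\end{equation*}
acts on the finite-dimensional weight space $W_{\gamma^\top}[\beta]$. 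Since scalar shifts do not alter the simultaneous eigenspace decomposition, it is equivalent to prove that the $\widetilde H_i$ have simple joint spectrum on $W_{\gamma^\top}[\beta]$ for generic $t$ and $z$.

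Next I would observe that simplicity is a Zariski-generic condition on $(t,z)$: in any fixed basis of $W_{\gamma^\top}[\beta]$, the locus on which two joint eigenvalues coincide is cut out by the vanishing of a discriminantal polynomial in the matrix entries of the $\widetilde H_i$, hence is closed. Therefore it suffices to exhibit a single parameter value at which the joint spectrum is simple.

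To produce such a value I would use a scaling limit: set $z_j=\lambda a_j$ with $a_1,\dots,a_{m+n}$ fixed and pairwise distinct, and study the $\lambda\to\infty$ asymptotics. Then $\lambda\widetilde H_i$ tends to $K_i := \sum_{j\ne i}e_{-\alpha_{ij}}e_{\alpha_{ij}}/(a_i-a_j)$, and it is enough to diagonalize the $K_i$ simultaneously with distinct eigenvalues. Work in a PBW basis of $W_{\gamma^\top}[\beta]$ indexed by monomials $f_{i_1}\cdots f_{i_k}v$ applied to the highest-weight vector; using the commutation relations $[e_\alpha,f_\beta]$ iteratively, one can show that under a suitable ordering of the monomials $K_1$ is upper-triangular with diagonal entries that are explicit rational functions of $t$ and the $a_i$. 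A direct check then shows these diagonal entries are pairwise distinct for generic parameters, so $K_1$ alone has simple spectrum in the limit and the joint spectrum of the $\widetilde H_i$ is simple in a neighborhood.

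The main obstacle is this last verification: checking triangularity of $K_1$ and genericity of its diagonal entries in the chosen PBW ordering. The computation is combinatorial but requires careful bookkeeping of how $e_{\alpha_{1j}}$ propagates past a long product of lowering operators in the Verma-type module $W_{\gamma^\top}$, whose highest weight $\gamma^\top$ itself depends on $t$. Should this direct approach become unwieldy, an alternative is to invoke the Mukhin--Tarasov--Varchenko theorem on the simplicity of the Bethe algebra spectrum on generic weight spaces of Verma-type modules and specialize it to $W_{\gamma^\top}$, using that $\gamma^\top$ is generic for generic $t$.
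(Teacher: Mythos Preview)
Your Zariski openness reduction is correct and matches the paper's first step, but the rest of the proposal has genuine problems.

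First, the scaling limit is vacuous: the operator $\widetilde H_i$ is homogeneous of degree $-1$ in $z$, so under $z_j=\lambda a_j$ one has $\lambda\widetilde H_i(\lambda a)=K_i(a)$ exactly, not merely asymptotically. Thus the ``limit'' just reproduces the original problem at the point $z=a$ and buys no simplification.

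Second, and more seriously, the claimed upper-triangularity of $K_1$ in a PBW basis of $W_{\gamma^\top}[\beta]$ is unsupported and almost certainly false as stated. The operators $e_{-\alpha}e_{\alpha}$ preserve weight and PBW degree in a Verma module but do not act triangularly in any natural monomial ordering; diagonalizing them is precisely the content of the Bethe ansatz, not a triangularity observation. Moreover, $W_{\gamma^\top}$ is a quotient of the parabolic Verma, so additional relations further obstruct any naive triangular argument. You yourself flag this step as the main obstacle, and it does not go through.

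By contrast, the paper exploits the Zariski openness in a different way: rather than working at generic $t$ on the dynamical side, it specializes $t$ to a sufficiently large integer. There the Hom space is identified via \eqref{Deligne-Classic}, \eqref{thespace}, \eqref{iso-lem} with the space of $\mathfrak{gl}(V)$-singular vectors of weight $(\gamma,\beta)$ in the finite-dimensional tensor product $\Lambda^{\bullet}(V\otimes W)$. In that classical setting one can quote \cite{MV} directly: the Gaudin Hamiltonians separate the Bethe-vector basis of the singular subspace, hence have simple joint spectrum for generic $z$. Your fallback suggestion of invoking a Mukhin--Tarasov--Varchenko type result points in the right direction, but the paper's route via large integer $t$ and the tensor-product singular space is what makes the citation apply cleanly.
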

\begin{proof}
    The simplicity of the spectrum is a Zariski open condition on parameters $t,z_i$, so it is sufficient for us to prove it for a special $t$ and generic $z_i$. The latter can be proved by taking a sufficiently large integer $t$. In this case we have isomorphisms \eqref{Deligne-Classic}, \eqref{thespace} and \eqref{iso-lem}, so the space \eqref{Hom-space} can be identified with the space Sing$\left( \Lambda^{\bullet}(V\otimes W)_{\gamma, \beta} \right)$ of all $\mathfrak{gl}(V)$-singular vectors of $\mathfrak{gl}(V)\oplus \mathfrak{gl}(W)$ weight space $(\gamma, \beta)$ in $\Lambda^{\bullet}(V\otimes W)$. However, it follows from \cite{MV} that for generic $z_i$ Gaudin hamiltonians $H_i$ separate the Bethe vectors basis in Sing$\left( \Lambda^{\bullet}(V\otimes W)_{\gamma, \beta} \right)$, thus we have the proposition.
\end{proof}
\begin{rem}
    It follows from this proposition that the Gaudin hamiltonians $H_i$ generically (i.e. for generic $z_i,t$) generate the image of the Bethe algebra from \cite{FRU} in the space
    \begin{equation}
        \End(\Hom_{\Rep(GL_t)}(V_{[\lambda,\mu]}, V^{*\otimes n}\otimes V^{\otimes m })).
    \end{equation}
\end{rem}
\section{The orthogonal case}
Following Section 3, let $t$ be a positive integer and $\mathfrak{so}_t$ be the Lie algebra of the orthogonal group $O_t$ preserving the form $(e_i,e_j)=\delta_{ij}$ for an orthonormal basis $\{e_i\}$ of the tautological representation $V$. Consider the tensor product $V^{\otimes n}$. The KZ connenction for $\mathfrak{so}_{t}$ looks as follows:
\begin{equation}\label{KZ_O_t}
    \nabla_{KZ}=d-\hbar\sum_{1\le a<b \le n} \frac{dz_{a}-dz_{b}}{z_a-z_b}\Omega_{a,b}, 
\end{equation}
\begin{equation}
    \Omega_{a,b}:= \frac{1}{2}\sum_{1\le i<j\le t}(E_{ij}-E_{ji})^{(a)}\otimes (E_{ji}-E_{ij})^{(b)}.
\end{equation} 

Note that $\Omega_{a,b}$ commutes with $O_t$ - it is obvious for $SO_t$, so it is sufficient to check this for diag$(1,\dots,1,-1)$ (the adjoint action of this diagonal matrix is trivial on $\Omega_{a,b}$).\\

In the Deligne category $\Rep(O_t)$ the Casimir operator $\Omega_{1,2}:V\otimes V \to V\otimes V$ corresponds to the homomorphism $P-C$ where $1,P,C$ are given by the following diagrams\\

\begin{center}
    \begin{tikzpicture}

  \draw (0,0) circle (2pt);
  \draw (1,0) circle (2pt);
  \draw (0,-1) circle (2pt);
  \draw (1,-1) circle (2pt);
  \node at (0.5,-1.5) {$1$};
  \draw (0,0) -- (0,-1);
  \draw (1,0) -- (1,-1);

  \draw (3,0) circle (2pt);
  \draw (4,0) circle (2pt);
  \draw (3,-1) circle (2pt);
  \draw (4,-1) circle (2pt);
  \node at (3.5,-1.5) {$P$};
  \draw (3,0) -- (4,-1);
  \draw (4,0) -- (3,-1);
  
  \draw (6,0) circle (2pt);
  \draw (7,0) circle (2pt);
  \draw (6,-1) circle (2pt);
  \draw (7,-1) circle (2pt);
  \node at (6.5,-1.5) {$C$};
  \draw (6,0) to[out=-45,in=-145] (7,0);
  \draw (6,-1) to[out=45,in=145] (7,-1);

\end{tikzpicture}
\end{center}
This formula gives rise to the KZ connection for the space
\begin{equation}\label{Rep_O_t}
    \Hom_{\Rep(O_t)}(V_{[\lambda]},V^{\otimes n})
\end{equation}
for a partition $\lambda$.
\subsection{$(\mathfrak{so}_t,\mathfrak{so}_{2n})$ duality.} Consider the space $\Lambda^{\bullet}(V\otimes W), \dim(V)=t, \dim(W)=n$. We fix the following notations from \cite{BK}. Let $\mathfrak{so}_{2n}$ be the orthogonal Lie algebra preserving the form $(v_i,v_j)=\delta_{i,-j}$ on a space $\mathbb{C}^{2n}$ with the basis $v_i, i\in \{-n,\dots, -1, 1,\dots, n\}$. Let 
\begin{equation}
    M_{i,j}:=E_{i,j}-E_{-j,-i}.
\end{equation}

The space $\Lambda^{\bullet}(V\otimes W)$ admits an action of $\mathfrak{so}_{2n}$ via
\begin{equation}
    M_{i,j}^{(k)}=\frac{x_{ki}\partial_{kj}-\partial_{kj}x_{ki}}{2},\quad M_{-i,j}^{(k)}=\frac{\partial_{ki}\partial_{kj}-\partial_{kj}\partial_{ki}}{2},\quad M_{i,-j}^{(k)}=\frac{x_{ki}x_{kj}-x_{kj}x_{ki}}{2},\quad i,j>0.
\end{equation}

We have the following equality in terms of the Clifford algebra
\begin{align}\label{duality_o_t}
    &\Omega_{a,b}=\frac{1}{2}\sum_{1\le i<j\le t}(x_{ia}\partial_{ja}-x_{ja}\partial_{ia})(x_{jb}\partial_{ib}-x_{ia}\partial_{jb})=\\=\frac{1}{2}\sum_{i<j}&(-x_{ia}\partial_{ib} x_{jb}\partial_{ja}+x_{ja}x_{jb}\partial_{ia}\partial_{ib}+x_{ia}x_{ib}\partial_{ja}\partial_{jb}-x_{ja}\partial_{jb}x_{ib}\partial_{ia})=\\=\frac{1}{2}\sum_{i<j} (-& M_{a,b}^{(i)} M_{b,a}^{(j)}-M_{b,a}^{(i)}  M_{a,b}^{(j)}+M_{-a,b}^{(i)} M_{a,-b}^{(j)}+M_{a,-b}^{(i)} M_{-a,b}^{(j)})=\\
    =-\frac{1}{2}\sum_{i\neq j}(& M_{a,b}^{(i)} M_{b,a}^{(j)} -M_{-a,b}^{(i)}  M_{a,-b}^{(j)})=-\frac{1}{2}(M_{a,b}M_{b,a}+M_{-a,b}M_{b,-a}) +\\&+\frac{1}{2}\sum_{1\le i\le t}(M_{a,b}M_{b,a}+M_{-a,b}M_{b,-a})^{(i)}
\end{align}
where the last term is a sum of quadratic elements acting on $i$th $\mathfrak{gl}(W)$ tensor component. Note that the space $\Lambda^{\bullet}(V\otimes W)$ still retains decomposition 
\begin{equation}
    \Lambda^{\bullet}(V\otimes W)=\bigoplus_{\lambda} \Lambda^{\lambda_1}W \otimes\dots \otimes  \Lambda^{\lambda_t}W.
\end{equation}

We may identify the space \eqref{Rep_O_t} with the space of $\mathfrak{so}(V)$-singular (under some choice of Borel subalgebra) vectors of weight $\lambda$ and of $\mathfrak{so}_{2n}$ weight $\beta:=(1-\frac{t}{2},\dots, 1-\frac{t}{2})$ in the highest weight $\mathfrak{so}_{2n}$-module of weight $\gamma^{\top}:=(\lambda_1'-\frac{t}{2},\dots, \lambda_n'-\frac{t}{2})$ where $\lambda'$ is the usual transposition of a partition (note that now the notation $\gamma^{\top}$ differs from the one given in Section 3). Let us deal with the last term in \eqref{duality_o_t}.
\begin{align}
    &(M_{a,b}M_{b,a}+M_{-a,b}M_{b,-a})^{(i)} = x_{ia}\partial_{ib}x_{ib}\partial_{ia}+\partial_{ia}\partial_{ib}x_{ib}x_{ia}=\\
    =x_{ia}& \partial_{ia}(1-x_{ib}\partial_{ib})+(1-x_{ia}\partial_{ia})(1-x_{ib}\partial_{ib})=1-x_{ib}\partial_{ib}=\frac{1}{2}-M_{bb}^{(i)},
\end{align}
so
\begin{equation}
    \sum_{i=1}^t(M_{a,b}M_{b,a}+M_{-a,b}M_{b,-a})^{(i)}=\frac{t}{2}-M_{bb}.
\end{equation}

This allows us to rewrite the KZ connection \eqref{KZ_O_t} as follows:
\begin{align}
    \nabla_{KZ}=d+&\frac{\hbar}{2}\sum_{1\le a<b\le n}\frac{dz_a-dz_b}{z_a-z_b}(M_{b,a}M_{a,b}+M_{a,a}-M_{b,b}+M_{-a,b}M_{b,-a}-\frac{t}{2}+M_{b,b})=\\
    &=d+\hbar\sum_{1\le a<b\le n}\frac{dz_a-dz_b}{z_a-z_b}(e_{\epsilon_{b}-\epsilon_{a}}e_{\epsilon_{a}-\epsilon_{b}}+e_{-\epsilon_{a}-\epsilon_{b}}e_{\epsilon_{a}+\epsilon_{b}}-\frac{t}{4}+\frac{M_{a,a}}{2}).
\end{align}
Here, $e_{\alpha},\alpha\in R$ are the normalized root elements and $\epsilon_{a}$ are the standard diagonal elements of the dual space to the Cartan subalgebra of $\mathfrak{so}_{2n}$. After the restriction to the weight space in question we will get
\begin{equation}
    d+\hbar\sum_{1\le a<b\le n}\frac{dz_a-dz_b}{z_a-z_b}(e_{\epsilon_{b}-\epsilon_{a}}e_{\epsilon_{a}-\epsilon_{b}}+e_{-\epsilon_{a}-\epsilon_{b}}e_{\epsilon_{a}+\epsilon_{b}}+\frac{1-t}{2}).
\end{equation}

Let us fix a $\mathfrak{so}_t\oplus \mathfrak{so}_{2n}$ weight space $M_{\lambda, \mu}$ in $\Lambda^{\bullet}(V\otimes W)$ with weight $(\lambda,\mu)$. We obtain the following.
\begin{thm}\label{O_t_duality}
    A function $f: \{(z_1,\cdots , z_{n}) \in \mathbb{C}^{n} \mid z_i \neq z_j\} \to M_{\lambda, \mu}$ is a flat section of the KZ connection \eqref{KZ_O_t} if and only if the function 
    \begin{equation}
        g = f \cdot [\prod_{a<b}(z_a-z_b)]^{\hbar\frac{1-t}{2}}
    \end{equation}
    is a flat section of the dual flat connection
    \begin{equation}\label{new_connection}
        d+\hbar\sum_{1\le a<b\le n}\frac{dz_a-dz_b}{z_a-z_b}(e_{\epsilon_{b}-\epsilon_{a}}e_{\epsilon_{a}-\epsilon_{b}}+e_{-\epsilon_{a}-\epsilon_{b}}e_{\epsilon_{a}+\epsilon_{b}})
    \end{equation}
\end{thm}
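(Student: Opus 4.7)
The essential computation has already been carried out in the paragraphs preceding the statement. The Clifford-algebra realization of $\Lambda^{\bullet}(V\otimes W)$ yields the key identity
\begin{equation*}
\Omega_{a,b}=-\frac{1}{2}(M_{a,b}M_{b,a}+M_{-a,b}M_{b,-a})+\frac{1}{2}\Bigl(\frac{t}{2}-M_{b,b}\Bigr),
\end{equation*}
and the plan is to reorganize this using the $\mathfrak{so}_{2n}$ commutation $M_{a,b}M_{b,a}=M_{b,a}M_{a,b}+M_{a,a}-M_{b,b}$ together with the $\sqrt 2$-normalization of the root vectors (so that $\frac{1}{2}M_{b,a}M_{a,b}=e_{\epsilon_b-\epsilon_a}e_{\epsilon_a-\epsilon_b}$ and $\frac{1}{2}M_{-a,b}M_{b,-a}=e_{-\epsilon_a-\epsilon_b}e_{\epsilon_a+\epsilon_b}$ under the convention $\mathrm{Tr}(e_\alpha e_{-\alpha})=1$). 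The $M_{b,b}$ terms then cancel and one obtains
\begin{equation*}
-\Omega_{a,b}=e_{\epsilon_b-\epsilon_a}e_{\epsilon_a-\epsilon_b}+e_{-\epsilon_a-\epsilon_b}e_{\epsilon_a+\epsilon_b}+\frac{M_{a,a}}{2}-\frac{t}{4}.
\end{equation*}

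Next I restrict to the weight space $M_{\lambda,\mu}$ with $\mu=\beta=(1-t/2,\dots,1-t/2)$, as dictated by the identification of $\Hom_{\Rep(O_t)}(V_\lambda,V^{\otimes n})$ with the $\mathfrak{so}_t$-singular subspace of that weight. There each $M_{a,a}$ acts by the scalar $1-t/2$, so the contribution $M_{a,a}/2-t/4$ collapses to the single constant $(1-t)/2$ and the KZ form becomes
\begin{equation*}
\nabla_{KZ}\bigr|_{M_{\lambda,\mu}}=d+\hbar\sum_{a<b}\frac{dz_a-dz_b}{z_a-z_b}\Bigl(e_{\epsilon_b-\epsilon_a}e_{\epsilon_a-\epsilon_b}+e_{-\epsilon_a-\epsilon_b}e_{\epsilon_a+\epsilon_b}+\frac{1-t}{2}\Bigr).
\end{equation*}

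The remaining step is a scalar gauge transformation. Since $\sum_{a<b}\frac{dz_a-dz_b}{z_a-z_b}=d\log\prod_{a<b}(z_a-z_b)$, the $(1-t)/2$ contributes an exact one-form, and the substitution $g=f\cdot\prod_{a<b}(z_a-z_b)^{\hbar(1-t)/2}$ converts a flat section of $\nabla_{KZ}\bigr|_{M_{\lambda,\mu}}$ into a flat section of \eqref{new_connection} and vice versa, since the gauge factor is invertible.

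The only obstacle I anticipate is careful bookkeeping of constants: verifying that the $\sqrt 2$-normalization is consistent for both root types $\pm(\epsilon_a\pm\epsilon_b)$ (equivalently, that $\mathrm{Tr}(M_{b,-a}M_{-a,b})=2$ as well as $\mathrm{Tr}(M_{b,a}M_{a,b})=2$), and tracking the exact cancellation of the $M_{b,b}$ pieces between the Howe expansion and the commutator. No deeper difficulty is expected, since the structure mirrors the $\mathfrak{gl}$-case treated in \thmref{duality}.
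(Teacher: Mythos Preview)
Your proposal is correct and follows essentially the same route as the paper: the theorem is stated immediately after the Clifford-algebra computation of $\Omega_{a,b}$, the commutator identity $M_{a,b}M_{b,a}=M_{b,a}M_{a,b}+M_{a,a}-M_{b,b}$, and the restriction to the weight $\mu=\beta=(1-t/2,\dots,1-t/2)$, and the paper treats the theorem as an immediate consequence of those displayed equations together with the obvious scalar gauge. Your bookkeeping worries (the $\sqrt{2}$ normalization of the root vectors and the cancellation of the $M_{b,b}$ terms) are exactly the checks the paper carries out in the lines preceding the statement, so nothing further is needed.
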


In particular, the connection \eqref{new_connection} is flat.
\begin{ex}
    Consider an isomorphism of Lie algebras $\mathfrak{so}(6)\cong\mathfrak{sl}(4)$ sending $M_{12}$, $M_{23}$,$M_{2,-3}$ to $E_{23},E_{34}$ and $E_{12}$ respectively. Under this isomorphism the connection \eqref{new_connection} can be presented as
    \begin{align}
        d+\frac{\hbar}{2} \Big(\frac{d(z_1-z_2)}{z_1-z_2}(E_{3,2}&E_{2,3}+E_{4,1}E_{1,4})+\\+\frac{d(z_1-z_3)}{z_1-z_3}(E_{4,2}E_{2,4}+E_{3,1}E_{1,3})+&\frac{d(z_2-z_3)}{z_2-z_3}(E_{4,3}E_{3,4}+E_{2,1}E_{1,2})\Big),
    \end{align}
    where $\overline{x}$ is a residue of an integer $x$ modulo $3$.
\end{ex}

Recall the definition of the boundary Casimir connection (see \cite{BK})
\begin{equation}
    \nabla_{{\rm bCas}}:=d-\frac{\hbar}{2}\sum_{a,b\in I,a\neq b} \frac{d(z_a-z_b)}{z_a-z_b} B_{a,b}^2,\quad I=\{-n,\dots,-1,1,\dots,n\}
\end{equation}
where $B_{a,b}:=E_{a,b}-E_{b,a}$ and assume that it acts on the trivial vector bundle with fiber $\Lambda^{\bullet}(V \otimes U)$ and the base space $\{(z_{-n},\dots, z_{-1},z_1,\cdots , z_{n}) \in \mathbb{C}^{2n} \mid z_i \neq z_j\}$ where $U$ is the tautological representation for $\mathfrak{so}(2n)$. Note that $\Lambda^{\bullet}W\otimes \Lambda^{\bullet}W$ contains the representation $\Lambda^{l}U$ of $\mathfrak{so}(2n)$ for any $l$. We can relate $\nabla_{{\rm bCas}}$ and \eqref{new_connection} as follows.\\

\begin{thm}
    Consider the pair of embeddings of an $O(V)\times\mathfrak{so}(U)$-module
    \begin{equation}
        \Lambda^{\bullet}(V^{\oplus 2} \otimes W) \overset{i_1}{\hookleftarrow} \Lambda^{l_1}U\otimes \dots \otimes \Lambda^{l_t}U \overset{i_2}{\hookrightarrow} \Lambda^{\bullet}(V \otimes U)
    \end{equation}
    for some non-negative integers $l_1,\dots,l_t$. The fiber restriction of $\nabla_{{\rm bCas}}$ with respect to the map $i_2$ is gauge equivalent to the residue connection
    \begin{equation}
        \nabla_{{\rm bCas}}':= d -\frac{\hbar}{2}\sum_{1\le a<b\le n}\frac{dz_a-dz_b}{z_a-z_b}(B_{a,b}^2+B_{a,-b}^2+B_{-a,b}^2+B_{-a,-b}^2)
    \end{equation}
    with $z_a=z_{-a}$ $\forall a$. The fiber restriction of \eqref{new_connection} with respect to the map $i_2$ is gauge equivalent to $\nabla_{{\rm bCas}}'$.
\end{thm}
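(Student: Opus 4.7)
The plan is to prove the two gauge equivalences separately, using $\nabla'_{{\rm bCas}}$ as a bridge between $\nabla_{{\rm bCas}}$ and \eqref{new_connection} on the common fiber $\bigotimes_i\Lambda^{l_i}U$.

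For the first equivalence, I would pull $\nabla_{{\rm bCas}}$ back to the diagonal $\{z_a=z_{-a}\}_{a=1}^n$ and reorganize the sum. For each $1\le a<b\le n$, the four ordered pairs $(c,d)\in I^2$ with $\{|c|,|d|\}=\{a,b\}$ restrict to share the 1-form $\frac{dz_a-dz_b}{z_a-z_b}$, and their combined coefficient is an overall constant times $B_{a,b}^2+B_{-a,-b}^2+B_{a,-b}^2+B_{-a,b}^2$, matching (up to this overall constant, absorbed into the gauge) the corresponding summand of $\nabla'_{{\rm bCas}}$. The ``collision'' contributions $\frac{d(z_a-z_{-a})}{z_a-z_{-a}}B_{a,-a}^2$ are formally singular on the diagonal. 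Noting that $E_{a,-a}^2=E_{-a,a}^2=0$ as operators on $\Lambda^\bullet U$, we have $B_{a,-a}^2=-(E_{a,-a}E_{-a,a}+E_{-a,a}E_{a,-a})$, which is a sum of orthogonal projections on each joint weight component of $\bigotimes_i\Lambda^{l_i}U$. A direct-sum scalar gauge $\prod_a(z_a-z_{-a})^{c_a}$, with $c_a$ depending on the component, then cancels the singular contribution, leaving precisely $\nabla'_{{\rm bCas}}$.

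For the second equivalence, I would expand $e_{\epsilon_b-\epsilon_a}e_{\epsilon_a-\epsilon_b}+e_{-\epsilon_a-\epsilon_b}e_{\epsilon_a+\epsilon_b}$ in the normalized root vectors of $\mathfrak{so}_{2n}$ and convert them from the split-form $M_{i,j}$-basis to the symmetric-form $B_{c,d}$-basis. Using $e_\alpha e_{-\alpha}+e_{-\alpha}e_\alpha=2e_{-\alpha}e_\alpha+h_\alpha$ for both $\alpha=\epsilon_a-\epsilon_b$ and $\alpha=\epsilon_a+\epsilon_b$, the quadratic part reproduces $-\tfrac12(B_{a,b}^2+B_{-a,-b}^2+B_{a,-b}^2+B_{-a,b}^2)$, while the Cartan corrections $h_{\epsilon_a\pm\epsilon_b}$ act by a constant on each weight component of $\bigotimes_i\Lambda^{l_i}U$ and are absorbed by a gauge $\exp\left(\tfrac\hbar2\sum_{a<b}\nu_{ab}\log(z_a-z_b)\right)$, exactly in the spirit of the transition from the KZ connection to the dual connection in \thmref{O_t_duality}.

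The main obstacle is the first step: since $B_{a,-a}^2$ is not central on $\bigotimes_i\Lambda^{l_i}U$, the cancelling gauge must vary from block to block, and one has to verify that the resulting operator-valued gauge is well-defined globally on the diagonal and that conjugation by it preserves the form of $\nabla'_{{\rm bCas}}$. The second step is essentially a finite-dimensional Lie algebra computation in $U(\mathfrak{so}_{2n})$, with careful normalization of the root vectors being the only bookkeeping hurdle.
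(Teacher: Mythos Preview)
Your plan matches the paper's. For the second equivalence, the paper carries out exactly your computation, packaging the basis conversion as the explicit isomorphism $e_j\mapsto(e_j+e_{-j})/\sqrt{2}$, $e_{-j}\mapsto i(e_{-j}-e_j)/\sqrt{2}$ between the split and symmetric presentations of $\mathfrak{so}_{2n}$; the leftover Cartan term comes out as $-2iB_{b,-b}$ and is gauged away just as you propose.

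For the first equivalence the paper resolves your ``main obstacle'' by reversing the order of operations. Rather than restricting to the diagonal and then searching for a block-by-block scalar gauge, it sets $z_{-a}=z_a+\epsilon_a$, observes that the coefficient of $d\log\epsilon_a$ in $\nabla_{{\rm bCas}}$ is exactly $-\tfrac{\hbar(n-1)}{2}B_{a,-a}^2$, applies the single operator-valued gauge
\[
\Psi=\exp\Bigl(-\tfrac{\hbar(n-1)}{2}\sum_{a=1}^n\log(\epsilon_a)\,B_{a,-a}^2\Bigr)
\]
on the open locus $\epsilon_a\neq 0$, and only then restricts the \emph{conjugated} connection to $\epsilon_a=0$. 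Your block-scalar gauge $\prod_a(z_a-z_{-a})^{c_a}$ is essentially $\Psi$ written in an eigenbasis of the $B_{a,-a}^2$, so the two constructions agree in spirit; but the paper's formulation never needs to unpack $B_{a,-a}^2$ and never evaluates the gauge on the diagonal, which dissolves the well-definedness concern you flag. One small correction: your identity $E_{a,-a}^2=0$ is valid on a single factor $\Lambda^l U$ but fails on $\bigotimes_i\Lambda^{l_i}U$, where $E_{a,-a}$ acts as a sum over tensor factors and the cross terms survive, so the description of $B_{a,-a}^2$ as a sum of projections is not quite right in that generality. The paper's operator-valued gauge avoids this issue entirely.
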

\begin{proof}
    Consider an isomorphism of two $\mathfrak{so}(2n)$ algebras preserving nondegenerate diagonal and antidiagonal symmetric forms given by
\begin{equation}\label{out}
    e_j \mapsto \frac{e_j+e_{-j}}{\sqrt{2}}, e_{-j}\mapsto {\rm i}\frac{e_{-j}-e_j}{\sqrt{2}}.
\end{equation}
Since $GL(U)$ acts on $\Lambda^{l_1}U\otimes \dots \otimes \Lambda^{l_t}U$, the restriction of the dual connection \eqref{new_connection} transforms under this isomorphism into
    \begin{equation}\label{components}
        -\frac{1}{2}\left(B_{a,b}^2+B_{a,-b}^2+B_{-a,b}^2+B_{-a,-b}^2-2{\rm i}B_{b,-b}\right)
    \end{equation}
where $B_{a,b}:=E_{a,b}-E_{b,a}$. Here, we can remove the last term by an appropriate gauge transformation (as it comes from an element of the Cartan subalgebra of $\mathfrak{so}(2n)$ before the isomorphism).\\

Now, take the connection $\nabla_{{\rm bCas}}$ acting on $\Lambda^{\bullet}(V\otimes U)$ and substitute a change of variables $z_{-i}=\epsilon_i+z_i$. Then it can be rewritten as follows
\begin{align}
    &\nabla_{{\rm bCas}}=d-\frac{\hbar}{2}\sum_{1\le a<b\le n} \Big( B_{a,b}^2d\log(z_{a}-z_{b})+ B_{-a,b}^2d\log(z_{a}-z_{b}+\epsilon_a)+ B_{b,-b}^2 d\log(\epsilon_b)+\\&+ B_{a,-b}^2d\log(z_{a}-z_{b}-\epsilon_b)+ B_{-a,-b}^2d\log(z_{a}-z_{b}+\epsilon_a-\epsilon_b)+ B_{a,-a}^2d\log(\epsilon_a)\Big).
\end{align}

The gauge transformation of $\nabla_{{\rm bCas}}$ given by
\begin{equation}\label{gauge}
    \Psi=\exp\left(-\frac{\hbar(n-1)}{2}\sum_{a=1}^n\log(\epsilon_a)B_{a,-a}^2\right)
\end{equation}
removes all singular terms of $\nabla_{{\rm bCas}}$ with respect to $\epsilon_a$, so we can extend the conjugated connection to the space $\epsilon_a=0, z_a\neq z_b$ if $a\neq b$. Such restriction in the base clearly gives us the connection $\nabla_{{\rm bCas}}'$.
\end{proof}

We also have the analogues of \lemref{isomor} and \thmref{thm_isomor}.

\begin{lem}
    For all non-integer and large enough integer $t$ we have an isomorphism
    \begin{equation}\label{mult_space_o_t}
        \phi :\Hom_{\Rep(O_t)}(V_{[\lambda]}, V^{\otimes n }) \cong W_{\gamma^\top}[\beta],
    \end{equation}
    where $W_{\gamma^\top}$ is the irreducible $\mathfrak{so}_{2n}$-module of the highest weight $\gamma^\top$.
\end{lem}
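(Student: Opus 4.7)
The plan is to adapt the argument of \lemref{isomor} to the orthogonal setting, substituting the $(\mathfrak{so}_t,\mathfrak{so}_{2n})$-duality of \thmref{O_t_duality} for the $(\mathfrak{gl}_t,\mathfrak{gl}_{m+n})$-duality used there. The overall scheme is to show that both sides of \eqref{mult_space_o_t} have the same dimension, constant in $t$ over the range of interest, to identify them concretely for large integer $t$, and then to propagate the identification to non-integer $t$ via rationality in $t$.

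First, I would verify the dimension statements. For the LHS, the space $\Hom_{\Rep(O_t)}(V_{\lambda},V^{\otimes n})$ is a quotient of a Hom space in the free rigid $\CC[T]$-linear tensor category spanned by Brauer diagrams; since $\CC[T]$ is a PID, one produces a spanning set polynomial in $T$ that specializes to a basis for $t\in\CC\setminus\mathbb{Z}$ and for sufficiently large integer $t$. For the RHS, parabolic induction of $W_{\gamma^\top}$ from a parabolic subalgebra $\mathfrak{p}\subset\mathfrak{so}_{2n}$ with Levi containing $\mathfrak{gl}(n)$ produces a PBW-type spanning set of $W_{\gamma^\top}[\beta]$ whose cardinality does not depend on $t$, and the relations among the PBW vectors are controlled by the singular vectors of the Verma module $M_{\gamma^\top}$ above weight $\beta$; these singular vectors sit in a $t$-stable set of weights for $t$ in the range under consideration.

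Next, for large integer $t$ the full monoidal functor $F:\Rep(O_t)\to\mathrm{Rep}(O_t)$ identifies the LHS with $\Hom_{\mathfrak{so}_t}(V_{\lambda},V^{\otimes n})$, and the computation preceding \thmref{O_t_duality} identifies this with the space of $\mathfrak{so}(V)$-singular vectors of weight $\lambda$ in the $\beta$-weight space of $\Lambda^{\bullet}(V\otimes W)$, which by orthogonal skew-Howe duality is exactly $W_{\gamma^\top}[\beta]$. Transporting this identification to the remaining $t$ proceeds by embedding both LHS and RHS bases into $(\Lambda^{\gamma_1}W\otimes\cdots\otimes\Lambda^{\gamma_t}W)[\beta]$ and observing, as at the end of the proof of \lemref{isomor}, that the change-of-basis matrix has entries rational in $t$, readable off from the fixed finite-dimensional subspace into which both images land as $t\to+\infty$. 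The main obstacle is this last step: one must rule out spurious singular vectors of $M_{\gamma^\top}$ appearing for the non-integer $t$ of interest (which would drop the rank of the PBW quotient), and this is controlled by the same weight-stability argument as in \lemref{isomor}, now adapted to the $D_n$ root system of $\mathfrak{so}_{2n}$.
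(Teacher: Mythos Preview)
Your overall plan mirrors the paper's, but there is a genuine technical gap in the last step. You propose to embed both sides into $(\Lambda^{\gamma_1}W\otimes\cdots\otimes\Lambda^{\gamma_t}W)[\beta]$ exactly as in \lemref{isomor}. In the orthogonal setting this does not work: the $\mathfrak{so}_{2n}$-action on $\Lambda^{\bullet}(V\otimes W)$ does \emph{not} preserve the $\mathfrak{gl}(V)$-weight grading, because the root vectors $M_{-a,b}$ and $M_{a,-b}$ are implemented by second-order differential (resp.\ multiplication) operators $\partial_{ka}\partial_{kb}$ and $x_{ka}x_{kb}$, which shift the $\mathfrak{gl}(V)$-weight by $\pm 2$ in one coordinate. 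Hence the descendants $f_{i_1}\cdots f_{i_s}v$ of the highest weight vector of $W_{\gamma^\top}$ do not stay inside a single $\Lambda^{\gamma_1}W\otimes\cdots\otimes\Lambda^{\gamma_t}W$, and your proposed ambient space for the change-of-basis argument is wrong.

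The paper repairs this in two steps you are missing. First, it invokes Howe's observation that any joint $O_t\times SO_{2n}$ highest weight vector in $\Lambda^{\bullet}(V\otimes W)$ is automatically a joint $GL_t\times GL_n$ highest weight vector, so one knows exactly where the highest weight vector sits. Second, it replaces your embedding target by the finite direct sum
\[
\bigoplus_{i_1,\dots,i_{l(\lambda)}\ge 0}\Lambda^{\gamma_1-2i_1}W\otimes\cdots\otimes\Lambda^{\gamma_{l(\lambda)}-2i_{l(\lambda)}}W\otimes\CC\cdot 1\otimes\CC\cdot 1\otimes\cdots,
\]
which absorbs the degree drops coming from the $M_{-a,b}$ operators yet is still finite-dimensional and independent of $t$. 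Once this is in place, the rationality-in-$t$ argument you sketch goes through. Your treatment of the LHS basis via Brauer diagrams and of parabolic induction for the RHS is in line with the paper.
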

\begin{proof}
    The proof of this lemma is similar to \lemref{isomor}, since we know that the joint highest weight vectors for $O_t\times SO_{2n}$ in $\Lambda^{\bullet}(V\otimes W)$ are necessarily joint highest weight vectors for $GL_t\times GL_m$ (\cite{How}[Section 4.3.5]). From this we see that these highest weight vectors and the relations on their descendants are independent of $t$, because we may embed a joint highest weight vector and its descendants into
    \begin{equation}\label{independence_O_t}
         \bigoplus_{i_1,\dots,i_{l(\lambda)}\ge 0} \Lambda^{\gamma_1-2i_1}W \otimes \dots \otimes \Lambda^{\gamma_{l(\lambda)} -2i_{l(\lambda)}}W \otimes \mathbb{C}\cdot 1  \otimes \mathbb{C}\cdot 1 \otimes \dots
    \end{equation}
    for a certain partition $\gamma$, but \eqref{independence_O_t} is still a finite-dimensional vector space. The choice of basis in $\Hom_{\Rep(O_t)}(V_{[\lambda]},V^{\otimes n})$ is repeated verbatim.
\end{proof}
\begin{thm}\label{KZ_connection_O_t_identification}
    The isomorphism $\phi$ identifies the connection \eqref{new_connection} on $W_{\gamma^\top}[\beta]$ and the KZ connection \eqref{KZ_O_t} on
    $\Hom_{\Rep(O_t)}(V_{[\lambda]}, V^{\otimes n })$. The flat sections of both connections are related as in \thmref{O_t_duality}.
\end{thm}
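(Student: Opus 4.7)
The plan is to follow the same strategy as in the proof of \thmref{thm_isomor}, combining \thmref{O_t_duality} with the preceding lemma and a polynomial-in-$t$ continuation argument. For sufficiently large positive integer $t$, the isomorphism $\phi$ of \eqref{mult_space_o_t} identifies $\Hom_{\Rep(O_t)}(V_{\lambda},V^{\otimes n})$ with the subspace of $\mathfrak{so}(V)$-singular vectors of weight $\gamma^\top$ inside the weight space $(\Lambda^\bullet(V\otimes W))[\beta]$. The operator identity \eqref{duality_o_t}, derived as an equality of explicit Clifford-algebra expressions on $\Lambda^\bullet(V\otimes W)$, restricts to this subspace because the KZ Casimirs $\Omega_{a,b}$ commute with the $O_t$-action. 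The restriction shows that the KZ connection \eqref{KZ_O_t} pulled back under $\phi^{-1}$ equals \eqref{new_connection} plus the scalar one-form $\hbar\sum_{a<b}\tfrac{1-t}{2}\tfrac{dz_a-dz_b}{z_a-z_b}$, and this scalar is absorbed by the gauge factor $\prod_{a<b}(z_a-z_b)^{\hbar(1-t)/2}$, exactly as in \thmref{O_t_duality}. So the theorem holds for large integer $t$.

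To extend to all non-integer and all large enough integer $t$, I would argue that both sides depend polynomially on $t$ in compatible bases. On the Deligne side, $\Omega_{a,b}$ is defined over the interpolation $\Rep(O,T)$, so in the $\mathbb{C}[T]$-basis produced in the proof of the preceding lemma its matrix entries are polynomial in $T$ and specialize to polynomial functions of $t$. On the Lie-algebra side, the operators $e_{\epsilon_b-\epsilon_a}e_{\epsilon_a-\epsilon_b}+e_{-\epsilon_a-\epsilon_b}e_{\epsilon_a+\epsilon_b}$ act on $W_{\gamma^\top}[\beta]$ in the PBW basis by matrices polynomial in $t$: although $\gamma^\top=(\lambda_1'-t/2,\dots,\lambda_n'-t/2)$ itself shifts with $t$, all relevant descendants of the highest-weight vector embed into the fixed finite-dimensional space \eqref{independence_O_t}, and by the singular-vector argument of the preceding lemma the PBW relations in this space are $t$-independent. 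The two families of matrices agree on the infinite set of large integer $t$, hence as polynomials in $t$, so they agree for every $t$ in the admissible range.

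Once the identification of connections is established for all such $t$, the relation between flat sections follows immediately from \thmref{O_t_duality}: a section $f$ of \eqref{KZ_O_t} corresponds under $\phi$ to the section $g=f\cdot\prod_{a<b}(z_a-z_b)^{\hbar(1-t)/2}$ of \eqref{new_connection}, with the same scalar gauge factor as before. I expect the main obstacle to be precisely the polynomial-dependence claim on the orthogonal dual side, since the built-in shift $-t/2$ in $\gamma^\top$ could a priori spoil polynomiality of the action; the resolution, as in \lemref{isomor}, is that the whole computation takes place inside a $t$-independent finite-dimensional ambient space, reducing the claim to finitely many polynomial identities that are already known to hold for infinitely many integer values of $t$.
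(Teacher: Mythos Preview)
Your proposal is correct and follows exactly the approach sketched in the paper, which simply writes ``Similar to \thmref{thm_isomor}'': establish the identification for large integer $t$ via \thmref{O_t_duality} and the Clifford computation, then extend by noting that both the Casimirs on the Deligne side and the operators $e_{\epsilon_b-\epsilon_a}e_{\epsilon_a-\epsilon_b}+e_{-\epsilon_a-\epsilon_b}e_{\epsilon_a+\epsilon_b}$ on the $\mathfrak{so}_{2n}$ side act polynomially in $t$ in the bases produced in the preceding lemma. Your explicit treatment of the $-t/2$ shift in $\gamma^\top$ and the reduction to the $t$-independent ambient space \eqref{independence_O_t} is precisely the orthogonal analogue of the argument in \lemref{isomor}.
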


\begin{proof}
    Similar to \thmref{thm_isomor}.
\end{proof}

The usual methods of \cite{FMTV} for finding integral solutions can not be applied directly for $\nabla_{bCas}$ for two reasons. Firstly, the poles of $\nabla_{bCas}$ differ from the root hyperplanes of $\mathfrak{h}^*_{\mathfrak{so}_{2n}}$. Lastly, there does not seem to be a canonical lift of the connection formula to the level of universal enveloping algebra $U(\widetilde{\mathfrak{n}}_- \oplus \mathfrak{h}\oplus \widetilde{\mathfrak{n}}_+)$ where $\widetilde{\mathfrak{n}}_\pm$ are free Lie algebras generated by $f_i,e_i$ respectively.

\begin{prob}
    An interesting problem would be to find integral formulas for solutions to \eqref{new_connection} or $\nabla_{{\rm bCas}}$.
\end{prob}

\section{Drinfeld-Kohno theorem in Deligne categories.}

Let $t\in \mathbb{C}\setminus \mathbb{Z}$ and $\Rep(GL_t)$ be the corresponding symmetric Deligne category. Let $V_1,...,V_n,Y $ be arbitrary objects of $ \Rep(GL_t)$. Let $\hbar\in \Bbb C$, and consider the KZ equations  
\begin{equation}
    \frac{\partial F}{\partial z_i}=\hbar\sum_{j\ne i} \frac{\Omega_{ij}}{z_i-z_j}F
\end{equation}
where $\Omega_{ij}$ are the Casimir endomorphisms and 
$F$ is a locally holomorphic solution in variables $z_1,...,z_n$ (defined locally in some simply connected region of $\Bbb C^n$ where $z_i\ne z_j$ and extended analytically) with values in the space $\Hom_{\Rep(GL_t)}(Y,V_1\otimes...\otimes V_n)$. 
Let $\rho(\hbar)$ be the monodromy representation of the pure braid group $B_n$ 
(for some base point) defined by this equation, when without loss of generality $V_1=V_2=\dots=V_n=V$. Thus, $\rho(\hbar)$ 
is well defined up to an isomorphism. \\

On the other hand, assume that $\hbar\notin \Bbb Q$. Let $q=e^{\pi i\hbar}$ and consider the quantum Deligne category $\Rep_q(GL_t)$ -- the Skein category with parameters $q$ and $a:=q^t=e^{\pi i\hbar t}$. Furthermore, assume that $q,a$ are multiplicatively independent, so the category $\Rep_q(GL_t)$ is semisimple and naturally 
equivalent to $\Rep(GL_t)$ as an abelian category (i.e., simple objects of both categories are labeled by the same set - pairs of partitions). Let 
$X_q\in \Rep_q(GL_t)$ be the $q$-analog of the object $X\in \Rep(GL_t)$.
The category $\Rep_q(GL_t)$ is braided, so we have a braid group representation 
$\rho_q: B_n\to {\rm Aut} (\Hom_{\Rep_q(GL_t)}(Y_q,V_q^{\otimes n}))$. 

\begin{thm} The representations $\rho(\hbar)$ and $\rho_q$ are isomorphic. 
\end{thm}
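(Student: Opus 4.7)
The plan is to reduce the theorem to the classical Drinfeld--Kohno theorem via interpolation in $t$. For every sufficiently large positive integer $N$ such that $l(\lambda)+l(\mu)\le N$ for all bipartitions $(\lambda,\mu)$ appearing as summands of $Y$, the specialization of the full monoidal functor $\Rep(GL_t)|_{t=N}\to\mathrm{Rep}(GL_N)$ identifies $\Hom_{\Rep(GL_t)}(Y,V^{\otimes n})$ with the corresponding classical Hom space and intertwines each Deligne Casimir $\Omega_{ij}$ with the usual Casimir of $\mathfrak{gl}_N$ acting on $V^{\otimes n}$. Hence $\rho(\hbar)$ at $t=N$ is identified with the classical $\mathfrak{gl}_N$ KZ monodromy. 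The parallel argument with the quantum specialization $\Rep_q(GL_t)|_{t=N}\to\mathrm{Rep}(U_q(\mathfrak{gl}_N))$ identifies $\rho_q$ at $t=N$ with the quantum $R$-matrix braid representation. The classical Drinfeld--Kohno theorem \cite{Dr1,Dr2,EK} then yields $\rho(\hbar)\cong\rho_q$ at every such integer $N$.

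Next, I would promote this to generic $t$ by interpolation. The endomorphism algebra of $V^{\otimes n}$ in $\Rep(GL_T)$ is the walled Brauer algebra $Br_{n,0}(T)$, defined over $\mathbb{C}[T]$, and each $\Omega_{ij}$ is polynomial in $T$. Consequently the matrix entries of $\rho(\hbar)(b)$ for $b\in B_n$, computed in a walled Brauer basis of $\Hom_{\Rep(GL_t)}(Y,V^{\otimes n})$, are holomorphic functions of $t$. Similarly the skein morphism spaces are defined over $\mathbb{C}[q^{\pm 1},(q^t)^{\pm 1}]$, and the matrix entries of $\rho_q(b)$ depend holomorphically on $q^t$. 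Agreement of the traces $\mathrm{tr}\,\rho(\hbar)(b)$ and $\mathrm{tr}\,\rho_q(b)$ at infinitely many integer values of $t$ then forces agreement as holomorphic functions of $t$, yielding equality of characters for all generic $t$.

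Finally, one must upgrade the equality of characters to an isomorphism of braid group representations. For generic $t\notin\mathbb{Z}$ both $\Rep(GL_t)$ and $\Rep_q(GL_t)$ are semisimple, so the braid action decomposes canonically into isotypic components across simple summands of $V^{\otimes n}$, and the abelian equivalence between the two categories on simple objects gives a bijection of these components. Character identity on each component, together with semisimplicity of the walled Brauer and HOMFLY--Hecke algebras at generic parameters, yields the claimed isomorphism. The main obstacle is precisely this passage from integer to generic $t$: one must verify that the relevant matrix entries extend holomorphically across the parameter space and that character agreement at integer $t$ implies representation-theoretic isomorphism at generic $t$ rather than merely at the integer points. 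A cleaner alternative that sidesteps the character argument is to build the intertwiner universally via the Drinfeld associator applied to the Casimirs $\Omega_{ij}$ in the Deligne setting, producing a canonical isomorphism $\rho(\hbar)\cong\rho_q$ valid uniformly in $t$, and then invoking the classical Drinfeld--Kohno theorem only to identify its target at large integer specializations.
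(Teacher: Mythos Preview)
Your interpolation approach has a genuine gap. You argue that the traces $\mathrm{tr}\,\rho(\hbar)(b)$ and $\mathrm{tr}\,\rho_q(b)$ are holomorphic in $t$ and agree at infinitely many integers, hence agree identically. But holomorphic functions agreeing on $\mathbb{Z}$ need not be equal---consider $\sin(\pi t)$. To salvage the argument you would need to know a priori that the KZ monodromy entries lie in some restricted class (say, Laurent polynomials in $q$ and $q^t$) where integer agreement forces equality; but establishing that the KZ monodromy has this form is essentially the content of the Drinfeld--Kohno theorem you are trying to prove. You flag this obstacle yourself, yet the proposal does not resolve it, so as written the argument is circular or incomplete.

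The paper sidesteps interpolation entirely and takes the route you sketch only in your final sentence. It works directly in $\Rep(GL_t)$ for the given complex $t$: the KZ associator (monodromy of the $n=3$ equation) together with the braiding $e^{\pi i\hbar\Omega}$ equips $\Rep(GL_t)$ with a new braided tensor structure, call it $\Rep(GL_t)(\hbar)$, in which the $B_n$-action on $\Hom(Y,V^{\otimes n})$ is by construction the KZ monodromy $\rho(\hbar)$. One then checks that the generating object $X\in\Rep(GL_t)(\hbar)$ has quantum dimension $[t]_q$ and that its braiding satisfies the Hecke relation $(\beta-q)(\beta+q^{-1})=0$. The skein category $\Rep_q(GL_t)$ is characterized by exactly this universal property, so there is a braided tensor functor $\Rep_q(GL_t)\to\Rep(GL_t)(\hbar)$ sending $X_q\mapsto X$, which is an equivalence of abelian categories and hence of braided tensor categories. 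This produces the isomorphism $\rho_q\cong\rho(\hbar)$ uniformly in $t$, with no appeal to the classical case and no analytic continuation.
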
 

\begin{proof} Following Drinfeld (\cite{Dr1,Dr2}), define a new braided tensor structure on $\Rep(GL_t)$ using the KZ equations. Namely, consider the KZ equation for $n=3$ 
with $z_1=0,z_2=z,z_3=1$. We then get 
$$
 F'(z)=\hbar\left(\frac{\Omega_{12}}{z}+\frac{\Omega_{23}}{z-1}\right)F(z)
$$
where $F(z)\in \Hom_{\Rep(GL_t)}(Y,V_1\otimes V_2\otimes V_3)$. 
Define the associativity isomorphism 
$$
\Phi_{V_1V_2V_3}: (V_1\otimes V_2)\otimes V_3\to V_1\otimes (V_2\otimes V_3)
$$ 
as the suitably renormalized monodromy operator from $0$ to $1$ of this KZ equation  for arbitrary $Y$, and define the braiding by $\beta:=e^{\frac{\hbar \Omega}{2}}$. 
Denote this new braided tensor category by $\Rep(GL_t)(\hbar)$. 
In this category, the braid group action on $\Hom_{\Rep(GL_t)(\hbar)}(Y,V^{\otimes n})$ is given, up to isomorphism, by the monodromy of the KZ equation.\\

Thus we have two braided tensor structures on the same semisimple abelian category. We can further endow both categories with the ribbon structure by letting the balancing morphism act by $q^{-\langle \lambda+2\rho,\lambda\rangle}$ on simple objects of weight $\lambda$.
The category $\Rep_q(GL_t)$ has a universal property: braided tensor functors from $\Rep_q(GL_t)$ to a ribbon category $\mathcal C$ correspond to (rigid) objects $X$ in $\mathcal C$ of quantum dimension $[t]_q$, where
$[t]_q:=\frac{q^t-q^{-t}}{q-q^{-1}}$ such that the braiding $\beta_{XX}: X\otimes X\to X\otimes X$ satisfies the Hecke relation $(\beta-q)(\beta+q^{-1})=0$. 
So we have a braided tensor functor $\Rep_q(GL_t)\to \Rep(GL_t)(\hbar)$ 
sending the tautological object $X_q$ to $X$, which is clearly an equivalence. The Drinfeld-Kohno theorem follows. 
\end{proof} 

Consider analogous representation of the braid group 
\begin{equation}
    \rho_O(\hbar): B_n \rightarrow {\rm Aut}(\Hom_{\Rep (O_t)}(Y,V^{\otimes n}))
\end{equation}
where $Y\in \Rep(O_t)$ and $V$ is the defining object. And let
\begin{equation}
    \rho_{O,q}: B_n \rightarrow {\rm Aut}(\Hom_{\Rep (O_t)}(Y_q,V_q^{\otimes n}))
\end{equation}
be a similar representation for the BWM category. Then assuming $q$ and $q^t$ are multiplicatively independent, we have the following.
\begin{thm}
    The representations $\rho_O(\hbar)$ and $\rho_{O,q}$ are isomorphic.
\end{thm}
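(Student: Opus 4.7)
The plan is to imitate the strategy of the preceding $GL_t$ theorem, with the Hecke skein relation replaced by the Birman--Wenzl--Murakami (Kauffman) relation.

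First, I would follow Drinfeld and use the KZ connection for the $\mathfrak{so}_t$-type Casimirs $\Omega_{ij}$ on $\Rep(O_t)$ to define a new braided monoidal structure on the semisimple abelian category $\Rep(O_t)$. Concretely, for $n=3$ and $z_1=0,z_2=z,z_3=1$, the normalized monodromy of the three-point KZ equation from $0$ to $1$ defines an associator $\Phi_{V_1V_2V_3}$, and I would define the braiding on $V\otimes V$ by $\beta := P\circ e^{\hbar\Omega/2}$, where $P$ is the tautological symmetry of $\Rep(O_t)$. Call the resulting braided tensor category $\Rep(O_t)(\hbar)$; by construction the braid group action on $\Hom_{\Rep(O_t)(\hbar)}(Y,V^{\otimes n})$ coincides, up to isomorphism, with the monodromy representation $\rho_O(\hbar)$ of the KZ equations. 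I would then promote this to a ribbon category by declaring the balancing on each simple object $V_\lambda$ to act by $q^{-\langle\lambda+\rho,\lambda\rangle}$, compatible with the self-duality $V\cong V^*$ inherited from the Deligne category.

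Next, the key ingredient is the universal property of the BWM skein category $\Rep_q(O_t)$: braided ribbon tensor functors from $\Rep_q(O_t)$ to any ribbon category $\mathcal C$ are classified by the data of a rigid self-dual object $X\in\mathcal C$ with a symmetric isomorphism $X\xrightarrow{\sim} X^*$, of quantum dimension $\frac{a-a^{-1}}{q-q^{-1}}+1$ with $a=q^{t-1}$, whose self-braiding $\beta_{XX}:X\otimes X\to X\otimes X$ satisfies the cubic Kauffman/BWM relation $(\beta-q)(\beta+q^{-1})(\beta-a^{-1})=0$ together with the standard compatibility between $\beta$ and the evaluation/coevaluation. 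I would verify that the object $V\in\Rep(O_t)(\hbar)$ equipped with its Deligne pairing and the braiding $\beta=P\circ e^{\hbar\Omega/2}$ satisfies these requirements. The three eigenvalues of $\Omega$ on $V\otimes V$ come from its three summands (symmetric traceless, antisymmetric, trivial), and one checks that with $q=e^{\pi i\hbar}$ and $a=q^{t-1}$ the three eigenvalues of $\beta$ are precisely $q,-q^{-1},a^{-1}$, giving the BWM relation.

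Invoking the universal property then produces a braided ribbon tensor functor $\mathcal F:\Rep_q(O_t)\to \Rep(O_t)(\hbar)$ sending $X_q\mapsto V$. Since $\hbar\notin\mathbb Q$ and $q,a$ are multiplicatively independent, both categories are semisimple with simple objects indexed by the same partitions, and $\mathcal F$ matches the tautological generators; thus $\mathcal F$ is an equivalence. Comparing braid group actions on $\Hom(Y_q,V_q^{\otimes n})$ and $\Hom(Y,V^{\otimes n})$, the representations $\rho_{O,q}$ and $\rho_O(\hbar)$ are isomorphic.

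I expect the main obstacle to be verifying the cubic Kauffman relation and the twist/duality compatibility for $\beta=P\circ e^{\hbar\Omega/2}$ with the prescribed quantum dimension; unlike the $GL_t$ case where a quadratic Hecke relation suffices, here one must track three eigenvalues of $\Omega$ on $V\otimes V$ and match them, after exponentiation, to the three roots of the BWM polynomial with the correct normalization of $a$ coming from the self-duality. Once this numerical match is secured, the universal-property argument proceeds exactly as in the $GL_t$ case.
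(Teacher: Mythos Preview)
Your proposal is correct and follows exactly the approach the paper takes: the paper's proof consists solely of the sentence ``The same as for $\Rep(GL_t)$,'' and what you have written is precisely the orthogonal analogue of that argument, with the Hecke relation replaced by the BWM relation. Your write-up in fact supplies more detail than the paper itself.
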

\begin{proof} The same as for $\Rep(GL_t)$. 
\end{proof} 
\appendix

\section{Hypergeometric solutions for $\lambda,\mu=0$ and $m=n=2$}
In this appendix we will describe explicit solutions of the KZ equations as in \eqref{OmegaSm} in terms of hypergeometric functions for the special case when $\lambda,\mu=0$ and $m=n=2$. In this case we are working in the space $\Hom_{\Rep(GL_t)}(\mathbbm{1}, V^{*\otimes 2}\otimes V^{\otimes 2}) \cong \CC[S_2]$, and the Casimirs $\Omega_{ij}$ act as in \eqref{OmegaSm}. Then letting $e, (12)$ be the two permutations in $\CC[S_2]$ we can express a KZ section as $\phi(z_1,z_2,z_3,z_4) = f(z_1,z_2,z_3,z_4) \cdot e + g(z_1,z_2,z_3,z_4) \cdot (12)$, and the KZ equations read
\begin{align}\label{KZS2}
    \begin{cases}
        \hbar^{-1} \partial_1 f &= \frac{g}{z_{12}} - \frac{tf+g}{z_{13}} \\
        \hbar^{-1} \partial_1 g &= \frac{f}{z_{12}} - \frac{f+tg}{z_{14}} \\
    \end{cases}
    \text{ and symm. eqs. for } z_i \mapsto z_{\pi(i)}, \ \pi \in \{(12)(34), (13)(24), (14)(23)\}
\end{align}
where for brevity we denote $z_{ij} := z_i - z_j$ and $\partial_i := \partial_{z_i}$. Also, denote $\Delta := t\hbar$. Then it is straightforward to check that the equations \eqref{KZS2} are solved by
\begin{align}
    f(z_1,z_2,z_3,z_4) &= \frac{A(z_1,z_2,z_3,z_4)}{z_{13}^{\Delta} z_{24}^{\Delta}} \\
    g(z_1,z_2,z_3,z_4) &= \frac{B(z_1,z_2,z_3,z_4)}{z_{14}^{\Delta} z_{23}^{\Delta}}.
\end{align}
where $A,B$ are functions depending on two parameters $c_1,c_2 \in \CC$:
\begin{align}
    A&:= c_1 \cdot\left(\frac{z_{14}z_{32}}{z_{12}{z_{34}}}\right)^{1-\Delta} \F \left( 1-\hbar-\Delta, 1+\hbar-\Delta; 2-\Delta; \frac{z_{14}z_{32}}{z_{12}{z_{34}}} \right) \\ &\qquad \qquad \qquad +c_2 \cdot \F \left( -\hbar, \hbar; \Delta; \frac{z_{14}z_{32}}{z_{12}{z_{34}}} \right) \\
    B&:= \hbar^{-1} z_{14}^{\Delta} z_{23}^{\Delta-1} z_{13}^{-\Delta+1} z_{24}^{-\Delta} z_{12} (\partial_1 A).
\end{align}
Note that these solutions involve one integration (in the hypergeometric functions). For comparison, \thmref{duality} and \thmref{solthm} give us the formula for solutions in this case as explained below.\\

Let us fix two integration cycles $\Gamma_{l_i},i=1,2$ for $l^{-1}_1: (1,2,3,4) \to (2,3,1,4)$ and \linebreak $l^{-1}_2: (1,2,3,4) \to (2,1,4,3)$. Note that $f_{c(\sigma)}v$ is not zero only in the case if $c(\sigma(4))$ is $2$. One can see that the vectors $f_2f_1 f_3f_2v=f_2f_3 f_1f_2v$ both correspond to $e\in \CC[S_2]$. Analogously we have the following correspondence
\begin{align}
    f_1f_3f_2f_2v=f_3f_1 & f_2f_2v \sim 2e+2(12),\\
    f_1f_2f_3f_2v &\sim e+(12),\\
    f_3f_2f_1f_2v &\sim e+(12).
\end{align}
This allows us to compute $\omega$:
\begin{multline}
    \omega = \left(\frac{2t_1t_4 - (t_1+t_4)(t_2+t_3)+t_2^2 +t_3^2}{t_2t_3(t_4-t_2)(t_4-t_3)(t_1-t_2)(t_1-t_3)}e+\frac{2t_1t_4 - (t_1+t_4)(t_2+t_3)+2t_2t_3}{t_2t_3(t_4-t_2)(t_4-t_3)(t_1-t_2)(t_1-t_3)}(12)\right)\cdot \\ \cdot dt_1 \wedge dt_2 \wedge dt_3 \wedge dt_4.
\end{multline}
Then we'll have the solutions given by
\begin{multline}
    \int_{\Gamma_{l_i}} \exp{\left(\hbar( z_{12}t_1+z_{23}(t_2+t_3) + z_{34}t_4) \right)}\left( \frac{(t_2-t_3)^2}{(t_1-t_2)(t_1-t_3)(t_2-t_4)(t_3-t_4)} \right)^{-\hbar}\cdot \\ \cdot \omega \cdot z_{12}^{t-1} z_{13}z_{14}z_{23}z_{24}z_{34}
\end{multline}

Data sharing not applicable to this article as no datasets were generated or analysed during the current study. All authors declare that they have no conflicts of interest.

\end{document}